\theoremstyle{plain}
\newtheorem{theorem}{Theorem}
\newtheorem{lemma}{Lemma}
\newtheorem{proposition}{Proposition}
\newtheorem*{TA}{Theorem A}
\newtheorem*{TB}{Theorem B}
\theoremstyle{definition}
\newtheorem{definition}{Definition}
\theoremstyle{remark}
\numberwithin{equation}{section}
\newcommand{\R}{\mathbb R}
\newcommand{\acont}{\widetilde{A}}
\newcommand{\bcont}{\widetilde{B}}
\begin{document}

\title[The Benjamin-Ono equation]{The IVP for the Benjamin-Ono equation in weighted Sobolev spaces}
\author{Germ\'an Fonseca}
\address[G. Fonseca]{Departamento  de Matem\'aticas\\
Universidad Nacional de Colombia\\ Bogota\\Colombia}
\email{gefonsecab@unal.edu.co}
\author{Gustavo Ponce}
\address[G. Ponce]{Department  of Mathematics\\
University of California\\
Santa Barbara, CA 93106\\
USA.}
\email{ponce@math.ucsb.edu}
\keywords{Benjamin-Ono equation,  weighted Sobolev spaces}
\subjclass{Primary: 35B05. Secondary: 35B60}
\begin{abstract} We study the initial value problem associated to the Benjamin-Ono equation.
The aim is to establish persistence properties of the solution flow in the weighted Sobolev spaces
$Z_{s,r}=H^s(\R)\cap L^2(|x|^{2r}dx)$, $s\in\R, \,s\geq 1$ and $s\geq r$. We also prove some  unique continuation properties of the solution flow in these spaces. In particular, these continuation principles demonstrate
that our persistence properties are sharp. 
\end{abstract}
\maketitle
\begin{section}{Introduction}\label{S: 1}

This work is concerned with the initial value problem (IVP)
for the  Benjamin-Ono (BO) equation  
\begin{equation}\label{BO}
\begin{cases}
\partial_t u + \mathcal  H\partial_x^2u +u\partial_x u = 0, \qquad t, x
\in \R,\\
u(x,0) = u_0(x),
\end{cases}
\end{equation}
where $ \mathcal  H$ denotes  the Hilbert transform
\begin{equation}
\label{hilbertt}
\begin{aligned}
\mathcal H f(x)&=\frac{1}{\pi} \,\text{p.v.} (\frac{1}{x}\ast f)(x)\\
&=\frac{1}{\pi}\lim_{\epsilon\downarrow 0}\int_{|y|\geq \epsilon}\frac{f(x-y)}{y}dy=-i\,(\text{sgn}(\xi)\,\widehat {f}(\xi))^{\lor}(x).
\end{aligned}
\end{equation}

The BO equation  was deduced by Benjamin
\cite{Be} and Ono \cite{On} as  a model for long internal gravity waves in deep stratified fluids.
It was also shown that it is  a completely integrable system (see \cite{AbFo}, \cite{CoWi} and references therein).
 
Several works have been devoted to the problem of finding the minimal regularity, measured in the Sobolev scale
$ H^s(\R) = \left(1-\partial^2_x\right)^{-s/2} L^2(\R)$,  which guarantees that the IVP \eqref{BO} is locally or globally wellposed (LWP and GWP, resp.), i.e. existence and uniqueness  hold in a space embedded  in $C([0,T]:H^s(\R))$, 
with the map data-solution from $H^s(\R)$ to  $C([0,T]:H^s(\R))$ being locally continuous. Let us recall them : 
in \cite{Sa} $s>3$ was proven, 
in  \cite{ABFS} and 
\cite{Io1} $s>3/2$, 
in \cite{Po} $s\geq 3/2$, 
in \cite{KoTz1}  $s>5/4$, 
in \cite{KeKo} $s>9/8$,
in \cite{Ta} $s\geq 1$,
in \cite{BuPl} $s>1/4$,
and finally
in \cite{IoKe} $s\geq 0$ was established.

Real valued solutions of the IVP \eqref{BO} satisfy infinitely many conservation laws (time invariant quantities), the first three are the following:
\begin{equation}
\begin{aligned}
\label{laws}
&\;I_1(u)=\int_{-\infty}^{\infty}u(x,t)dx,\;\;\;\;I_2(u)=\int_{-\infty}^{\infty}u^2(x,t)dx,\\
&\;I_3(u)=\int_{-\infty}^{\infty}\,(|D_x^{1/2}u|^2-\frac {u^3}{3})(x,t)dx,
\end{aligned}
\end{equation}
where $D_x=\mathcal H\,\partial_x$.

Roughly, for $k\geq 2$ the $k$-conservation law $\,I_k$ provides an  \it a priori \rm estimate of the $L^2$-norm of the derivatives of order $(k-2)/2$  of the solution,  i.e. $\|D_x^{(k-2)/2}u(t)\|_2$. This  allows one to deduce GWP from LWP results. 

For existence of solutions with non-decaying at infinity initial data we refer to \cite{IoLiSc} and \cite{FoLi}. 

In the  BO equation the dispersive effect is described
by a non-local operator and   is significantly weaker than that exhibited by the 
Korteweg-de Vries (KdV) equation
$$
\partial_t u + \partial^3_x u + u \partial_x u = 0.
$$
Indeed,  it was proven in \cite{MoSaTz}  that for any $s\in\R$ the map data-solution from $H^s(\R)$ to $C([0,T]:H^s(\R))$ is not locally $C^2$, 
and in  \cite{KoTz2} that it is not locally uniformly continuous. This  implies that no LWP results can be obtained by an argument based only on a  contraction  method. This is certainly not the case of the KdV (see \cite{KePoVe}).

Our interest here is to study real valued solutions of the IVP \eqref{BO}  in weighted Sobolev spaces
\begin{equation}
\label{spaceZ}
Z_{s,r}=H^s(\R)\cap L^2(|x|^{2r}dx),\;\;\;\;\;\;s,\,r\in\R,
\end{equation}
and decay properties of solutions of the equation \eqref{BO}.
In this direction R. Iorio \cite{Io1} proved  the following results:

\begin{TA}\label{theorem1} $($\cite{Io1}$)$ (i) The IVP \eqref{BO} is GWP in $Z_{2,2}$.

(ii) If  $\,\widehat{u}_0(0)=0$, then the IVP \eqref{BO} is GWP 
in $\dot Z_{3,3}$.

(iii) If  $u(x,t)$ is a solution of the IVP \eqref{BO} such that $u\in C([0,T]: Z_{4,4}) $ for arbitrary  $T>0$, then $u(x,t)\equiv 0$.

\end{TA}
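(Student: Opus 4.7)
The most transparent setting is the Fourier side, where $u\in L^2(|x|^{2r}dx)$ corresponds (for integer $r$) to $\widehat u(\cdot,t)\in H^r(\R_\xi)$ and \eqref{BO} becomes
\begin{equation}\label{fourierBO}
\partial_t \widehat u(\xi,t)=-i\,\psi(\xi)\,\widehat u(\xi,t)-\frac{i\xi}{2}\,\widehat{u^2}(\xi,t),\qquad \psi(\xi):=|\xi|\xi.
\end{equation}
The three thresholds $r=2,3,4$ are dictated by the regularity of $\psi$ at the origin: $\psi'(\xi)=2|\xi|$ and $\psi''(\xi)=2\,\mathrm{sgn}(\xi)$ are locally bounded, but distributionally $\psi'''=4\delta_0$ and $\psi^{(4)}=4\delta_0'$. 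Thus differentiating \eqref{fourierBO} in $\xi$ more than twice produces singular contributions at $\xi=0$, which can be accommodated in $L^2$ only by vanishing moments of $\widehat u$ at the origin. This single observation drives all three parts.

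\emph{Part (i).} GWP in $H^2(\R)$ follows from LWP (e.g., \cite{Po} for $s\geq 3/2$) together with the conservation laws \eqref{laws} and the higher ones of BO. For the weight, set $A(t):=\|\partial_\xi^2\widehat u(\cdot,t)\|_{L^2_\xi}^2=\|\,|x|^2 u(\cdot,t)\|_2^2$, differentiate \eqref{fourierBO} twice in $\xi$ (only $\psi,\psi',\psi''$ appear, all locally bounded), and pair with $\partial_\xi^2\widehat u$. The nonlinear piece is controlled via $\|\,|x|^2 u^2\|_2\leq \|u\|_\infty \|\,|x|^2 u\|_2$ and Sobolev embedding, yielding $A'(t)\leq C(\|u\|_{H^2})(1+A(t))$ and hence persistence in $Z_{2,2}$ by Gronwall.

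\emph{Part (ii).} For $r=3$, Leibniz applied to $\psi\,\widehat u$ contributes the singular term $4i\,\widehat u(0,t)\,\delta_0$, which lies in $L^2_\xi$ only if $\widehat u(0,t)\equiv 0$. Since $\widehat u(0,t)=I_1(u)$ is conserved, the hypothesis $\widehat u_0(0)=0$ kills this obstruction for all $t$, and the argument of part~(i) extends verbatim to $\dot Z_{3,3}$.

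\emph{Part (iii).} For $r=4$, the Leibniz expansion produces the singular piece $12i\,\partial_\xi\widehat u(0,t)\,\delta_0+4i\,\widehat u(0,t)\,\delta_0'$, which must vanish for every $t$. In particular
\[
\partial_\xi\widehat u(0,t)=-i\int_\R x\,u(x,t)\,dx\equiv 0.
\]
Multiplying \eqref{BO} by $x$ and integrating, using $\widehat{\mathcal H\partial_x u}(0)=0$ and $\int_\R xu\,\partial_xu\,dx=-\tfrac12\|u\|_2^2$, gives
\[
\frac{d}{dt}\int_\R x\,u(x,t)\,dx=\tfrac12\|u(\cdot,t)\|_2^2=\tfrac12\|u_0\|_2^2,
\]
so $\int_\R x\,u(x,t)\,dx$ is affine in $t$ with slope $\tfrac12\|u_0\|_2^2$. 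Remaining identically zero forces $\|u_0\|_2=0$, i.e., $u\equiv 0$.

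\emph{Main obstacle.} The delicate point is justifying rigorously that the formal singular contributions at $\xi=0$ genuinely obstruct membership of $\partial_\xi^r\widehat u(\cdot,t)$ in $L^2$, so that the moment conditions above are \emph{necessary} and not merely sufficient; and closing the nonlinear estimate when $\partial_\xi^r$ falls on $\widehat{u^2}$, which reduces to weighted $L^2$ bounds on products controlled by $\|u\|_{L^\infty}$ via Sobolev embedding under the constraint $s\geq r$ built into the definition of $Z_{s,r}$.
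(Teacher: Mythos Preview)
Theorem~A is not proved in this paper: it is quoted as background from Iorio~\cite{Io1}, so there is no proof here to compare against line by line. That said, your plan is essentially Iorio's original strategy as the paper itself describes it (see the discussion around \eqref{linearasso} and remark~(c) after Theorem~\ref{theorem5}): persistence in $Z_{k,k}$ for $k\le 2$ because $\psi,\psi',\psi''$ are locally bounded; the obstruction $\psi'''=4\delta_0$ at $k=3$ killed by $\widehat u(0)=0$; and for $k=4$ the first-moment identity $\tfrac{d}{dt}\int xu\,dx=\tfrac12\|u_0\|_2^2$ combined with the necessary vanishing of $\int xu\,dx$ forcing $u\equiv 0$. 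Your singular-term computation $12i\,\partial_\xi\widehat u(0)\delta_0+4i\,\widehat u(0)\delta_0'$ is correct.

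What the present paper does differently, for its own extensions (Theorems~\ref{theorem3}--\ref{theorem5}) to fractional indices, is worth noting. For persistence it abandons the Fourier-side Leibniz approach entirely and works on the physical side with truncated weights $w_N^\theta$, the $A_2$ bound of Proposition~\ref{proposition1}, and the commutator estimate of Theorem~\ref{theorem8}; this machinery is needed because fractional $\xi$-derivatives do not obey a clean Leibniz rule. For the unique-continuation direction it does return to Fourier variables and Duhamel, but the obstruction is detected not via distributions $\delta_0,\delta_0'$ but via Proposition~\ref{proposition2}: a jump of $\text{sgn}(\xi)$ at $\xi=0$ prevents membership in $L^2_{1/2}$. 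Your integer-index argument is simpler and adequate for Theorem~A, but would not extend to the paper's sharp thresholds $5/2$ and $7/2$.

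The gap you flag as the ``main obstacle'' is the only real one, and it is genuine: you cannot read off the necessity of $\partial_\xi\widehat u(0,t)=0$ merely by differentiating the PDE in $\xi$, since $\widehat u(\cdot,t)\in H^4$ for all $t$ does not by itself give $\partial_t\widehat u(\cdot,t)\in H^4$. The clean route (and the one Iorio and this paper both take) is to pass through the Duhamel representation \eqref{duh}, show that the integral term and all non-singular linear terms lie in $L^2(|x|^8dx)$ under the hypotheses, and conclude that the remaining piece---whose Fourier transform carries the $\delta_0,\delta_0'$ contributions---must also be an $L^2$ function, which is impossible unless the coefficients vanish. For integer weights this is straightforward distribution theory; you should say so explicitly rather than leave it as an obstacle.
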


Above we have introduced the notation
\begin{equation}
\label{spaceZdot}
\dot Z_{s,r}=\{ f\in H^s(\R)\cap L^2(|x|^{2r}dx)\,:\,\widehat {f}(0)=0\},\;\;\;\;\;\;\;s,\,r\in\R.
\end{equation}
 Notice that the conservation law $I_1$ in  \eqref{laws} tells us that the property $\,\widehat{u}_0(0)=0$
 is preserved by the solution flow.
 
 We observe that the linear part of the equation in \eqref{BO} $L=\partial_t  + \mathcal  H\partial_x^2\,$
 commutes with the operator $\Gamma = x-2 t\mathcal H\partial_x$, i.e.
 $$
 [L;\Gamma]=L\Gamma-\Gamma L=0.
 $$
   In fact, one can deduce  (see \cite{Io1}) that for a solution $v(x,t)$ of the associated linear problem 
 \begin{equation}
 \label{linearasso}
 v(x,t)=U(t)v_0(x)=e^{-it\mathcal H\partial_x^2}v_0(x)=\ (e^{-it\xi|\xi|}\,\widehat{v}_0)^{\lor}(x),
 \end{equation}
to  satisfy that  
 $v(\cdot, t)\in  L^2(|x|^{2k}dx), \,t\in[0,T]$, one needs $v_0\in Z_{k,k},\;k\in \mathbb Z^+$ for $k=1,2$ and
  $$
  \int_{-\infty}^{\infty}x^j\,v_0(x)dx=0,\;\;\;\;j=0, 1,..., k-3,\;\;\;\text{if}\;\;\;k\geq 3.
  $$

   Also one notices that the 
  traveling wave $\phi_c(x+t),\,c>0$ for the BO equation 
  $$
  \phi(x)=\frac{-4\;\;\;\;}{1+x^2},\;\;\;\;\;\;\;\;\;\;\;\phi_c(x+t)=c\,\phi(c(x+ct)),
  $$
  has very mild decay at infinity. In this case, the traveling wave  is negative and travels to the left.
  To get a positive traveling wave moving to the right one needs to consider the equation
  \begin{equation}
  \label{BOneg}
  \partial_t v - \mathcal  H\partial_x^2v +v\partial_x v = 0, \qquad t, x\in\R,
  \end{equation}
  and observes that if $u(x,t)$ is a solution of  \eqref{BO} then
  $$
 v(x,t)=-u(x,-t),
 $$
 satisfies equation \eqref{BOneg}. In particular, \eqref{BOneg} has the traveling wave solution
 $$
 v(x,t)=\psi_c(x-t)= c\psi(c(x-ct)),\;\;\;\,c>0\;\;\;\;\;\text{with}\;\;\;\;\psi(x)=-\phi(x).
 $$
 
 In \cite{Io2} R. Iorio strengthened his unique continuation result  in  $Z_{4,4}$ found in  \cite{Io1} (Theorem A, part (iii)) by proving:
 
 \begin{TB}   $($\cite{Io2}$)$ \label{theorem2} 
 Let $u\in C([0,T] : H^2(\R))$ be a solution of the IVP \eqref{BO}. If   there exist  three different times
 $\,t_1, t_2, t_3\in [0,T]$ such that 
 \begin{equation}
 \label{3timesw} 
 u(\cdot,t_j)\in Z_{4,4},\;\;j=1,2,3,\;\;\text{then}\;\;\;u(x,t)\equiv 0.
 \end{equation}
 
\end{TB}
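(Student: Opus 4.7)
The plan is to translate the weighted-decay condition $u(\cdot,t_j)\in Z_{4,4}$ into Sobolev regularity of $\widehat{u}(\cdot,t_j)$ via Plancherel, and then exploit the mismatch between this regularity and the limited smoothness at $\xi=0$ of the symbol $\xi|\xi|$ of the BO dispersion. By time-translation invariance of \eqref{BO} I may assume $t_1=0$ and $0<t_2<t_3$, so $u_0=u(\cdot,0)\in Z_{4,4}$ and $\widehat{u}(\cdot,t_j)\in H^4_\xi(\R)$. Working from the Duhamel representation
\[
\widehat{u}(\xi,t)=e^{-it\xi|\xi|}\widehat{u_0}(\xi)-\frac{i\xi}{2}\int_0^t e^{-i(t-s)\xi|\xi|}\widehat{u^2}(\xi,s)\,ds,
\]
I compare the one-sided limits at $\xi=0^\pm$ of $\partial_\xi^k\widehat{u}(\cdot,t)$ for $k=2,3$; the Sobolev embedding $H^4(\R)\hookrightarrow C^3(\R)$ forces these to agree at the times $t_j$.

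First I derive $I_1(u_0)=0$. Direct differentiation gives $\partial_\xi^2(e^{-it\xi|\xi|})|_{\xi=0^\pm}=\mp 2it$, so $\partial_\xi^2\bigl(e^{-it\xi|\xi|}\widehat{u_0}\bigr)$ suffers a jump of $-4it\,\widehat{u_0}(0)$ at $\xi=0$. The Duhamel contribution to this jump vanishes: the prefactor $\xi$ together with the parity of the kernel makes the one-sided limits of $\partial_\xi^2$ of the Duhamel integral agree. Continuity of $\partial_\xi^2\widehat{u}(\cdot,t_2)$ at $\xi=0$ therefore yields $t_2\,\widehat{u_0}(0)=0$, and since $t_2\neq 0$ I conclude $\widehat{u_0}(0)=I_1(u_0)=0$.

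Next, with $I_1(u_0)=0$ in hand, I analyze the jump of $\partial_\xi^3\widehat{u}$ at $\xi=0$. A Leibniz expansion based on $\partial_\xi^2(e^{\mp it\xi^2})|_{\xi=0^\pm}=\mp 2it$ and $\partial_\xi^3(e^{\mp it\xi^2})|_{\xi=0^\pm}=0$ produces a linear jump equal to $-12t\int x\,u_0(x)\,dx$. A parallel evaluation of $\partial_\xi^2$ of the Duhamel integrand at $\xi=0^\pm$, combined with the conservation $\widehat{u^2}(0,s)=I_2(u_0)$, yields a nonlinear jump of $-3t^2 I_2(u_0)$. Enforcing the total jump to vanish at $t=t_2$ and $t=t_3$ produces
\[
4\int x\,u_0(x)\,dx+t_j\,I_2(u_0)=0,\qquad j=2,3.
\]
Subtracting gives $(t_2-t_3)I_2(u_0)=0$; since $t_2\neq t_3$, $I_2(u_0)=\|u_0\|_2^2=0$, so $u_0\equiv 0$, and the uniqueness of solutions to \eqref{BO} in $H^2(\R)$ then forces $u\equiv 0$.

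The main obstacle will be the bookkeeping in the second step: one must carefully isolate the singular contribution from $e^{-it\xi|\xi|}$, identify the corresponding Duhamel correction, and verify that the two combine into the affine-in-$t$ relation above, whose vanishing at three distinct times forces both coefficients to vanish. To legitimize the pointwise evaluations at $\xi=0^\pm$ one uses Theorem~A(i): the datum lies in $Z_{4,4}\subset Z_{2,2}$, so $u\in C([0,T]:Z_{2,2})$ and in particular $x^2u^2(\cdot,s)\in L^1$ uniformly on $[0,T]$, giving $\widehat{u^2}(\cdot,s)$ enough regularity near the origin for the jump computation.
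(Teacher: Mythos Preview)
The paper does not prove Theorem~B; it is quoted from Iorio \cite{Io2}, and only a sketch of Iorio's idea is given in Remark~(c) (the identity \eqref{ole1} for the first moment, combined with a Duhamel argument showing the first moment must vanish in each subinterval $(t_1,t_2)$ and $(t_2,t_3)$). Your argument is correct and is essentially a clean, self-contained implementation of that same idea: reading the $Z_{4,4}$ condition as $C^3$-regularity of $\widehat u(\cdot,t_j)$ and extracting information from the one-sided jumps at $\xi=0$ of $\partial_\xi^2$ and $\partial_\xi^3$ of the Duhamel representation. Your affine system $4\int x u_0\,dx + t_j I_2(u_0)=0$ for $j=2,3$ is exactly the manifestation of \eqref{ole1} in this computation, and subtracting the two equations is equivalent to saying that a strictly increasing affine function cannot vanish twice.

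Two minor remarks. First, your Step~1 (deriving $\widehat{u_0}(0)=0$ from the jump of $\partial_\xi^2\widehat u$) is correct but not actually needed for Step~2: since $\partial_\xi^3(e^{-it\xi|\xi|})|_{\xi=0^\pm}=0$, the term $\widehat{u_0}(0)$ does not enter the jump of the third derivative, and the affine system follows directly. Second, your appeal to Theorem~A(i) to place $u\in C([0,T]:Z_{2,2})$ is the right move; it gives $x^4 u^2(\cdot,s)\in L^1$ uniformly in $s$, hence $\widehat{u^2}(\cdot,s)\in C^4$ near $\xi=0$ uniformly, which legitimizes the differentiation under the Duhamel integral and the evaluation of one-sided limits.

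It is worth noting that the paper's own contribution is the \emph{sharp} version, Theorem~\ref{theorem5}, with $Z_{7/2,7/2}$ in place of $Z_{4,4}$. There the authors cannot rely on pointwise continuity of $\partial_\xi^3\widehat u$ and instead work with $D_\xi^{1/2}\partial_\xi^3$, using Stein's characterization (Theorem~\ref{theorem9}) and Proposition~\ref{proposition2} to detect the obstruction at $\xi=0$. Your elementary jump computation is the $Z_{4,4}$ shadow of that argument.
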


\vskip.1in

Our  goal in this work is to extend the results in Theorem A and Theorem B from integer values to the continuum  optimal range of indices $(s,r)$.
 Our main results are the following:
 
 \begin{theorem}\label{theorem3} (i) Let $s\geq 1, \;r\in [0,s]$, and $\,r<5/2$. If $u_0\in Z_{s,r}$, then the solution $u(x,t)$ of the IVP \eqref{BO} satisfies that $ u\in C([0,\infty):Z_{s,r})$.
 
 (ii) For  $s>9/8$  ($s\geq 3/2$), $\;r\in [0,s]$, and $\,r<5/2$ the IVP \eqref{BO} is LWP (GWP resp.) in $Z_{s,r}$.

(iii) If  $\,r\in [5/2,7/2)$ and $\,r\leq s$, then the IVP \eqref{BO} is GWP 
in $\dot Z_{s,r}$.

\end{theorem}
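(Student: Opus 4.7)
My plan is to combine the existing $H^s$ well-posedness theory (Tao's GWP for $s\geq 1$ and the Kenig--Koenig LWP for $s>9/8$) with a weighted $L^2$ energy estimate. For sufficiently smooth data the solution satisfies the formal identity
\begin{equation*}
\frac{1}{2}\frac{d}{dt}\||x|^r u(t)\|_2^2 = -\int |x|^{2r} u\,\mathcal{H}\partial_x^2 u\,dx - \int |x|^{2r} u^2\partial_x u\,dx.
\end{equation*}
The cubic term is integrated by parts to $\frac{1}{3}\int \partial_x(|x|^{2r})u^3\,dx$, hence is bounded by $c\,\|u\|_\infty^2\,\||x|^{r-1/2}u\|_2\,\||x|^r u\|_2$. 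I would control $\|u\|_\infty$ by Sobolev embedding (free for $s\geq 1$), absorb the intermediate weight $|x|^{r-1/2}$ by interpolating between $\|u\|_2$ (preserved by conservation of $I_2$) and $\||x|^r u\|_2$, and close with Gronwall.

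The main difficulty is the dispersive contribution. Since $\mathcal{H}\partial_x^2$ is skew-adjoint, I rewrite
\begin{equation*}
-\int |x|^{2r} u\,\mathcal{H}\partial_x^2 u\,dx = -\int (|x|^r u)\,[\,|x|^r,\,\mathcal{H}\partial_x^2\,]u\,dx,
\end{equation*}
and the task reduces to bounding the commutator in $L^2$ by $\|u\|_{H^s}+\||x|^r u\|_2$. For an integer $r=k$, this is a Leibniz-type expansion paired with the weighted $L^2$-boundedness of $\mathcal{H}$ on $L^2(|x|^{2j}\,dx)$ for $|j|<1/2$, handled inductively on $k$; equivalently, one can apply a power of the operator $\Gamma = x-2t\mathcal{H}\partial_x$ highlighted in the introduction to the equation and exploit $[L,\Gamma]=0$. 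For the fractional range I would interpolate these integer estimates via Stein's complex interpolation along the analytic family $z\mapsto |x|^z\mathcal{H}|x|^{-z}$, or via a weighted commutator lemma along the lines developed for KdV in fractional weighted spaces. This is the crucial obstacle: the threshold $r<5/2$ is dictated here, because for $r\geq 5/2$ the commutator expansion generates a term proportional to $\mathcal{H}\delta_0$ tested against $u$, i.e. a contribution that collapses to the zero-frequency value $\widehat{u}(0,t)$ and cannot be absorbed by the weighted norm.

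Once the weighted \emph{a priori} bound is in place, a standard approximation procedure yields (i) and (ii): mollify $u_0$ inside $Z_{s,r}$, solve each regularized IVP by the base $H^s$ theory, control the weighted norm uniformly via Gronwall, and pass to the limit using continuous dependence in $H^s$ together with weak compactness in $L^2(|x|^{2r}dx)$; since the weighted estimate grows only with the $H^s$-norm of the solution, the weighted lifespan coincides with the $H^s$-lifespan, so LWP (resp.\ GWP) in $H^s$ lifts to LWP (resp.\ GWP) in $Z_{s,r}$. For part (iii), the condition $u_0\in\dot Z_{s,r}$ combined with conservation of $I_1$ gives $\widehat{u}(0,t)\equiv 0$, which cancels exactly the obstruction identified at $r=5/2$ and allows the same commutator analysis to proceed up to the next integer barrier $r=7/2$, beyond which one would need additional vanishing moments of $u_0$ (consistent with the linear analysis recalled after equation \eqref{linearasso}).
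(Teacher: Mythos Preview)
Your overall strategy---weighted $L^2$ energy estimates, commutator bounds for $\mathcal{H}$, and interpolation---matches the paper's, but several of the tools you invoke need modification, and one substantive ingredient is missing.

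First, you work with the singular weight $|x|^r$ directly. The commutator bound one actually needs is of Calder\'on type, $\|[\mathcal{H};a]\partial_x^2 f\|_2 \leq c\,\|\partial_x^2 a\|_\infty\,\|f\|_2$ (Theorem~\ref{theorem8}), and $|x|^r$ does not have bounded second derivative (singular at $0$ for $r<2$, unbounded at infinity for $r>2$). The paper instead uses truncated smooth weights $w_N$ with $w_N\sim\langle x\rangle$ on $|x|\leq N$ and constant beyond, so that all derivatives of $w_N^\theta$ are bounded uniformly in $N$; one lets $N\to\infty$ at the end. This truncation is also what makes the formal energy identity rigorous.

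Second, your interpolation via the analytic family $z\mapsto|x|^z\mathcal{H}|x|^{-z}$ is replaced in the paper by a direct three-lines inequality on weighted Sobolev norms (Lemma~\ref{lemma1}): $\|J^{\theta a}(\langle x\rangle^{(1-\theta)b}f)\|_2 \leq c\,\|\langle x\rangle^b f\|_2^{1-\theta}\,\|J^a f\|_2^\theta$, combined with the $A_2$-boundedness of $\mathcal{H}$ on $L^2(w_N^{2\theta}dx)$ for $\theta\in(-1/2,1/2)$ uniformly in $N$ (Proposition~\ref{proposition1}). This handles the fractional exponent without analyzing an operator-valued analytic family.

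Third, and most substantively, for $r=2+\theta\in(2,5/2)$ the commutator expansion does not close on its own. The paper factors the weight as $x^2\,w_N^{2+2\theta}$ and uses the identity $x\,\mathcal{H}\partial_x^2 u=\mathcal{H}\partial_x^2(xu)-2\mathcal{H}\partial_x u$ (from $[\mathcal{H};x]\partial_x=0$). After the usual manipulations a residual term $\|w_N^\theta\,x\,\partial_x u\|_2$ remains, and this is controlled not by interpolation alone but by deriving the equation satisfied by $x\,\partial_x u$ and running a \emph{coupled} energy estimate for $\|x\,w_N^{1+\theta}u\|_2$ and $\|w_N^\theta\,x\,\partial_x u\|_2$ simultaneously. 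Your sketch does not mention this step. Part~(iii) proceeds by the analogous mechanism one weight higher, now invoking $\int u\,dx=0$ so that $x\,\mathcal{H}u=\mathcal{H}(xu)$; this, rather than an ``$\mathcal{H}\delta_0$'' term in a commutator expansion, is the precise place where the zero-mean hypothesis enters.
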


\begin{theorem}\label{theorem4} 
 Let $u\in C([0,T] : Z_{2,2})$ be a solution of the IVP \eqref{BO}. If   there exist  two different times
 $\,t_1, t_2\in [0,T]$ such that 
 \begin{equation}
 \label{2timesw} 
 u(\cdot,t_j)\in Z_{5/2,5/2},\;\;j=1,2,\;\;\text{then}\;\;\;\widehat {u}_0(0)=0\,,\;\;(\text{so}\;\; u(\cdot, t)\in  \dot Z_{5/2,5/2}).
 \end{equation}
 
\end{theorem}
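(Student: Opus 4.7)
By Theorem~A(i) the solution persists in $Z_{2,2}$, so $u\in C([0,T]:Z_{2,2})$. Recall that $g\in L^2(|x|^{5}dx)$ is equivalent (via Plancherel) to the Fourier-side condition $D^{5/2}_\xi\widehat g\in L^2$, so the hypothesis \eqref{2timesw} reads $\widehat u(\cdot,t_j)\in H^{5/2}(\R_\xi)$, $j=1,2$. Take $t_1<t_2$ WLOG and set $\tau_0:=t_2-t_1>0$. On the Fourier side, Duhamel between $t_1$ and $t_2$ rearranges to
\begin{equation*}
e^{-i\tau_0\xi|\xi|}\,\widehat u(\xi,t_1)\;=\;\widehat u(\xi,t_2)+\mathcal N(\xi),\qquad\mathcal N(\xi):=\frac{i\xi}{2}\int_{t_1}^{t_2}e^{-i(t_2-\tau)\xi|\xi|}\,\widehat{u^{2}}(\xi,\tau)\,d\tau.
\end{equation*}
The plan is to prove $\mathcal N\in H^{5/2}(\R_\xi)$, so that the left-hand side is in $H^{5/2}$, and then apply a ``singularity lemma'' at $\xi=0$ to conclude $\widehat u(0,t_1)=0$.

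The singularity lemma reads: \emph{if $g\in H^{5/2}(\R)$ and $t\neq 0$, then $e^{-it\xi|\xi|}g\in H^{5/2}(\R)$ forces $g(0)=0$.} To prove it, expand $e^{-it\xi|\xi|}=1-it\,\xi|\xi|+R_t(\xi)$ with $R_t(\xi)=O(\xi^{4})$ near the origin; after a suitable cutoff, multiplication by $R_t$ preserves local $H^{5/2}$-regularity near $\xi=0$, so $\xi|\xi|\,g$ must itself lie in $H^{5/2}$ locally at the origin. Two formal derivatives of $\xi|\xi|g$ produce the term $2\,\mathrm{sgn}(\xi)\,g(\xi)=2g(0)\,\mathrm{sgn}(\xi)+2\,\mathrm{sgn}(\xi)\,(g-g(0))$; the second summand lies in $H^{1/2}_{\mathrm{loc}}$ (using the $C^{1,\alpha}$-regularity of $g\in H^{5/2}$), while $\mathrm{sgn}(\xi)\notin H^{1/2}_{\mathrm{loc}}$ at the origin, so $g(0)=0$. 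Applied with $g=\widehat u(\cdot,t_1)$ and $t=\tau_0$, the lemma gives $\widehat u(0,t_1)=0$, and conservation of $I_1$ in \eqref{laws} then forces $\widehat u_0(0)=\widehat u(0,t)=0$ for every $t\in[0,T]$; hence $u(\cdot,t_j)\in\dot Z_{5/2,5/2}$ per \eqref{spaceZdot}.

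The main obstacle is the bound $\mathcal N\in H^{5/2}_\xi$. The saving grace is the prefactor $\xi$ in the Duhamel integrand, which supplies an extra vanishing at $\xi=0$ that precisely compensates the regularity loss caused by the symbol $e^{-i(t_2-\tau)\xi|\xi|}$ (a linear-flow phenomenon already reflected in the moment conditions required for weight $k\geq 3$ recalled in Section~\ref{S: 1}). Concretely, one reduces to showing $\xi\,\widehat{u^{2}}(\xi,\tau)\in H^{5/2}_\xi$ uniformly in $\tau\in[t_1,t_2]$, i.e.\ $u^{2}\in L^\infty_\tau(H^1_x\cap L^2(|x|^{5}dx))$. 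The $H^1$ part follows from $u\in C([0,T]:H^2)\hookrightarrow C([0,T]:W^{1,\infty})$; the weighted part is handled through the pointwise splitting $|x|^{5/2}u^{2}=(|x|^{2}u)\,(|x|^{1/2}u)$ together with an interpolation bound on $\||x|^{1/2}u\|_{L^\infty}$ coming from $u\in C([0,T]:Z_{2,2})\cap C([0,T]:H^2)$. Propagating the vanishing at $\xi=0$ through the symbol $e^{-i(t_2-\tau)\xi|\xi|}$ is then a variant of the singularity lemma in which the order of vanishing at the origin exceeds the critical one, established via the Stein-derivative pointwise characterization of $H^{5/2}$.
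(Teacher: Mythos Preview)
Your strategy---rewrite Duhamel between $t_1$ and $t_2$, show the nonlinear piece $\mathcal N$ lies in $H^{5/2}_\xi$, and then apply a singularity lemma at $\xi=0$ to force $\widehat u(0,t_1)=0$---matches the paper's. The paper implements exactly this: your ``singularity lemma'' is the combination of the explicit formula \eqref{flinear} and Proposition~\ref{proposition2}, and your use of the prefactor $\xi$ in $\widehat z=\tfrac{i\xi}{2}\widehat{u^2}$ is precisely the observation that $\widehat z(0)=0$ which the paper exploits for the integral term.

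There is, however, a real gap in your treatment of $\mathcal N$. Your implicit claim that $h\in H^{5/2}_\xi$ with $h(0)=0$ implies $e^{-is\xi|\xi|}h\in H^{5/2}_\xi$ is false without decay of $h$ as $|\xi|\to\infty$: the problem is not at $\xi=0$ but at high frequency. The second $\xi$-derivative of $e^{-is\xi|\xi|}h$ contains the term $-4s^2\xi^2 e^{-is\xi|\xi|}h$, and after applying $D_\xi^{1/2}$ via the Stein product rule \eqref{pointwise2} and Proposition~\ref{propositionB} one picks up $\|\,|\xi|^{1/2}\xi^2 h\|_2=\|\xi^{7/2}\widehat{u^2}\|_2$; even the weaker requirement $\xi^2 h=\xi^3\widehat{u^2}\in L^2$ (i.e.\ $u^2\in H^3$) already fails pointwise in $\tau$, since persistence only gives $u\in C([0,T]:H^{5/2})$. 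The paper closes this derivative-loss gap by invoking the \emph{local smoothing effect} of Theorem~\ref{theorem10}: for data in $H^{5/2}$ one gains half a derivative in $l^\infty_k L^2([k,k+1]\times[0,T])$, which after Cauchy--Schwarz in $t'$ yields $\partial_x^3(u^2)\in L^2_TL^2_x$ and controls the offending term (their $\mathcal A_1$). Your sketch omits this mechanism entirely, and the vanishing of $h$ at the origin does nothing to compensate for it. A secondary inaccuracy: $\xi\,\widehat{u^2}\in H^{5/2}_\xi$ is equivalent to $\langle x\rangle^{5/2}\partial_x(u^2)\in L^2$, not to $u^2\in H^1\cap L^2(|x|^5dx)$; the corrected statement is still reachable by a splitting of the type you propose, once you use the persistence $u\in C([0,T]:H^{5/2}\cap L^2(|x|^{5^-}dx))$ rather than merely $Z_{2,2}$.
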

 
 \vskip.1in
 
 \begin{theorem}\label{theorem5} 
 Let $u\in C([0,T] : \dot Z_{3,3})$ be a solution of the IVP \eqref{BO}. If   there exist  three different times
 $\,t_1, t_2, t_3\in [0,T]$ such that 
 \begin{equation}
 \label{3timeus} 
 u(\cdot,t_j)\in Z_{7/2,7/2},\;\;j=1,2,3,\;\;\text{then}\;\;\;u(x,t)\equiv 0.
 \end{equation}
 
\end{theorem}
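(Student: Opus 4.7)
The strategy is to exploit the limited regularity of the BO dispersion symbol $e^{-it\xi|\xi|}$ at $\xi=0$: its second $\xi$-derivative has a jump discontinuity proportional to $\mathrm{sgn}(\xi)$, which propagates into a jump of $\partial_\xi^3\widehat u(\cdot,t)$ whose size is a polynomial in $t-t_1$ with coefficients given by time-invariant scalar moments of the solution. The hypothesis $\widehat u(\cdot,t_j)\in H^{7/2}$ forces this jump to vanish at each of the three times, producing more conditions than unknowns and ultimately driving $u\equiv 0$.

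First I would record the basic data. The persistence $u\in C([0,T]:\dot Z_{3,3})$ gives $\widehat u(0,t)\equiv 0$ and $\widehat u(\cdot,t)\in H^3\hookrightarrow C^{2,1/2}$; the conservation law $I_2$ yields $\widehat{u^2}(0,s)=\|u_0\|_2^2=:m$ constant in $s$; and from $|x|^k u(\cdot,s)\in L^2$ for $0\leq k\leq 3$ combined with Cauchy--Schwarz one obtains $x^k u^2(\cdot,s)\in L^1$, so $\widehat{u^2}(\cdot,s)\in C^3$ uniformly in $s\in[0,T]$. The hypothesis $u(\cdot,t_j)\in Z_{7/2,7/2}$ is equivalent to $\widehat u(\cdot,t_j)\in H^{7/2}(\R_\xi)$, hence $\partial_\xi^3\widehat u(\cdot,t_j)\in H^{1/2}(\R_\xi)$; since $\mathrm{sgn}\notin H^{1/2}$ (its Fourier transform fails to lie in $L^2(|\eta|d\eta)$), any jump discontinuity of $\partial_\xi^3\widehat u(\cdot,t_j)$ at $\xi=0$ is forbidden.

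Next I would run Duhamel based at $t_1$,
\[
\widehat u(\xi,t)=e^{-i(t-t_1)\xi|\xi|}\,\widehat u(\xi,t_1)-\frac{i\xi}{2}\int_{t_1}^{t}e^{-i(t-s)\xi|\xi|}\,\widehat{u^2}(\xi,s)\,ds,
\]
and compute $\partial_\xi^2 e^{-i\tau\xi|\xi|}=[-2i\tau\,\mathrm{sgn}(\xi)-4\tau^2\xi^2]e^{-i\tau\xi|\xi|}$, which jumps by $-4i\tau$ at $\xi=0$, noting that the $-4i\tau\,\delta(\xi)$ component of $\partial_\xi^3 e^{-i\tau\xi|\xi|}$ is annihilated either by $\widehat u(0,t_1)=0$ in the linear part or by the explicit prefactor $\xi$ in the nonlinear part. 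A term-by-term Leibniz expansion, retaining only the jump contributions at $\xi=0$ (namely $3P''(\xi,t-t_1)\partial_\xi\widehat u(\xi,t_1)$ in the linear term and $-\tfrac{i}{2}\int_{t_1}^t 3P''(\xi,t-s)\widehat{u^2}(\xi,s)\,ds$ in the nonlinear term, with $P(\xi,\tau)=e^{-i\tau\xi|\xi|}$), yields
\[
\mathrm{Jump}_{\xi=0}\,\partial_\xi^3\widehat u(\cdot,t)=-12 i(t-t_1)\,\partial_\xi\widehat u(0,t_1)\;-\;3(t-t_1)^2\,m.
\]

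Imposing vanishing of this jump at $t=t_2$ and $t=t_3$ and dividing by the nonzero factor $3(t_j-t_1)$ gives the pair $-4i\,\partial_\xi\widehat u(0,t_1)=(t_j-t_1)\,m$ for $j=2,3$; subtracting the two equations yields $(t_2-t_3)m=0$, hence $m=\|u_0\|_2^2=0$ and therefore $u\equiv 0$. The main obstacle is to make the distributional jump calculus rigorous, namely to justify that the one-sided limits $\lim_{\xi\to 0^\pm}\partial_\xi^3\widehat u(\xi,t)$ exist in the classical sense and coincide with the values obtained formally above; the decisive inputs are the uniform $C^3$ smoothness of $\widehat{u^2}(\cdot,s)$ secured above, the cancellation of the $\delta(\xi)$ contributions via either $\widehat u(0,t_1)=0$ or the $\xi$ prefactor, and the observation that once those singular pieces are removed, every remaining term in the Leibniz expansion of $\partial_\xi^3[e^{-i\tau\xi|\xi|}\cdot(C^3\text{ factor})]$ is continuous across $\xi=0$.
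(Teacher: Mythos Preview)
Your strategy is sound and reaches the conclusion by a more direct route than the paper. The paper never computes the jump of $\partial_\xi^3\widehat u$ pointwise; instead it applies $D_\xi^{1/2}$ to the Duhamel expansion, localizes with a cutoff $\chi$, and uses commutator bounds together with Stein's $\mathcal D^{1/2}$ characterization and the estimate $\mathcal D^{1/2}(e^{-it|\xi|\xi})\lesssim t^{1/4}+t^{1/2}|\xi|^{1/2}$ to show that every term is in $L^2$ except those carrying $\mathrm{sgn}(\xi)$. After an integration by parts in $t'$ and a cancellation between the linear and nonlinear $\mathrm{sgn}$ contributions, the paper reduces to $\int_{t_1}^{t_2}(\int xu\,dx)\,dt'=0$ and then (rerunning from $t_2$) the analogous identity on $(t_2,t_3)$; since $\frac{d}{dt}\int xu\,dx=\tfrac12\|u_0\|_2^2$ is constant, $\int xu\,dx$ is affine in $t$ and cannot vanish in two disjoint intervals unless $\|u_0\|_2=0$. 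Your quadratic equation $-12i(t_j-t_1)\,\partial_\xi\widehat u(0,t_1)-3m(t_j-t_1)^2=0$ encodes exactly the same obstruction (recall $\partial_\xi\widehat u(0,t_1)=-i\int xu(x,t_1)\,dx$), and your linear-algebra finish is cleaner than the paper's mean-value argument.

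One point in your last paragraph needs correction. You treat $\widehat u(\cdot,t_1)$ as a ``$C^3$ factor'', but $H^{7/2}(\R)$ embeds only in $C^{2,\alpha}$ for $\alpha<1$, so $\partial_\xi^3\widehat u(\cdot,t_1)$ need not have a pointwise value at $\xi=0$ and the classical one-sided limits of $\partial_\xi^3\widehat u(\cdot,t)$ may fail to exist. The repair is to argue modulo $H^{1/2}_{\rm loc}$ rather than pointwise: the term $P\,\partial_\xi^3\widehat u(\cdot,t_1)$ lies in $H^{1/2}_{\rm loc}$ directly from the hypothesis at $t_1$, and every other term in the Leibniz expansion, once the explicit $c\,\mathrm{sgn}(\xi)$ pieces are subtracted, is Lipschitz near $0$ (for this you should note that your Cauchy--Schwarz actually yields $|x|^6u^2=(|x|^3u)^2\in L^1$, hence $\widehat{u^2}\in C^6$, not merely $C^3$; at least $C^4$ is needed so that $\partial_\xi^3\widehat{u^2}$ is Lipschitz). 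With this one has $\partial_\xi^3\widehat u(\cdot,t)-\tfrac{J(t)}{2}\mathrm{sgn}(\xi)\chi(\xi)\in H^{1/2}_{\rm loc}$ near $\xi=0$, and at $t=t_j$ the hypothesis forces $\tfrac{J(t_j)}{2}\mathrm{sgn}(\xi)\chi(\xi)\in H^{1/2}_{\rm loc}$, hence $J(t_j)=0$ as you claim.
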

\vskip.1in

\underline{Remarks} : (a) Theorem \ref{theorem4} shows that the condition $\widehat {u}_0(0)=0$ is necessary to have  persistence property
of the solution in $Z_{s,5/2}$,  with $s\geq 5/2$, so in that regard Theorem \ref{theorem3} parts (i)-(ii) are sharp. Theorem \ref{theorem5} affirms  that there is 
an upper limit of the spacial $L^2$-decay rate of the solution (i.e.  $|x|^{7/2}u(\cdot,t)\notin L^{\infty}([0,T]: L^2(\R))$, for 
any  $T>0$) regardless of the decay and  regularity of the non-zero initial data $u_0$. In particular, Theorem \ref{theorem5} shows that Theorem \ref{theorem3} part (iii) is sharp.

(b) In part (ii) of Theorem \ref{theorem3} we shall use that in that case 
the solution $u(x,t)$ satisfies
$$
\partial_x u\in L^1([0,T]:L^{\infty}(\R)),
$$
(see \cite{KeKo}, \cite{KoTz1}, and \cite{Po}) to  establish that the map data-solution is locally continuous from 
$Z_{s,r}$ into $C([0,T]:Z_{s,r})$.

 (c) The condition in Theorem \ref{theorem5} involving  three times seems to be technical and  may be reduced to  two different times as that  in Theorem \ref{theorem4} . We recall that unique continuation principles for the nonlinear Schr\"odinger equation and the generalized Korteweg-de Vries equation have been established in \cite{EKPV1} and \cite{EKPV2} resp. under assumptions on the solutions at two different times. 
 Following the idea in  \cite{Io2} one finds from  the equation \eqref{BO}  that 
 \begin{equation}
 \label{ole1}
 \frac{d\;}{dt}\int_{-\infty}^{\infty} x u(x,t) dx = \frac{1}{2}\,\|u(t)\|_2^2=\frac{1}{2}\,\|u_0\|_2^2,
\end{equation}
so the first momentum of a non-null solution of the BO equation  is strictly increasing. On the other hand, using the integral equation version of the BO equation 
from the hypotheses one can deduce that  the first momentum must vanish somewhere in the  time intervals $(t_1,t_2)$ and $(t_2,t_3)$. This implies  that $u(x,t)\equiv 0$.

(d) We recall that if  for a solution $u\in C([0,T]:H^s(\R))$ of \eqref{BO} one has that $\,\exists \,t_0\in[0,T]$ such that $u(x,t_0)\in H^{s'}(\R),\,s'>s$, then  $u\in C([0,T]:H^{s'}(\R))$. So we shall mainly consider the most interesting case $s=r$ in \eqref{spaceZ}.

(e) Consider  the IVP for generalized Benjamin-Ono (gBO) equation
\begin{equation}\label{gBO}
\begin{cases}
\partial_t u + \mathcal  H\partial_x^2u \pm u^k\partial_x u = 0, \qquad t, x
\in \R, k \in \mathbb Z^+,\\
u(x,0) = u_0(x),
\end{cases}
\end{equation}
with $u_0$ a real valued function. In this case the best LWP available results are : for $k=2, \,s\geq 1/2$ (see \cite{KeTa}), for $k=3,\, s>1/34$ (see \cite{Ve}), and 
for $k\geq 4,\,s\geq1/2-1/k$ (see \cite{Ve}). 
So for any power $k=1,2,...$ with focussing $(+)$ or defocusing $(-)$ non-linearity the IVP \eqref{gBO} is LWP in $H^1(\R)$. So the local results
 in  Theorems \ref{theorem3} and Theorem \ref{theorem4} and their proofs extend to the IVP \eqref{gBO}
 with possible different values $s=s(k)$ for the minimal regularity required. This is also the case for Theorem  \ref{theorem5} when the power $k$ in \eqref{gBO} is odd in the focusing and defocusing regime.
 
 (f) In \cite{KKNO} the number $7/2$ was mentioned as a possible threshold in the spaces \eqref{spaceZ}. 
 
\vskip.in

 The proof of Theorem \ref{theorem3} is based on weighted energy estimates and involves several inequalities for   the Hilbert transform $\mathcal H$.  
 Among them we shall use the $A_p$ condition introduced in \cite{Mu}, (see Definition \ref{definition1}). 
    It was proven in \cite{MuHuWh} that this is a necessary and sufficient condition  for the Hilbert transform $\mathcal H$ to be bounded in  
  $L^p(w(x)dx)$ (see  \cite{MuHuWh}, ), i.e. $\;w\in A_p,\;1<p<\infty$ if and only  if
\begin{equation}
\label{a1}
( \int_{-\infty}^{\infty}|\mathcal Hf|^pw(x)dx)^{1/p}\leq c^*\, (\int_{-\infty}^{\infty} |f|^pw(x)dx)^{1/p},
\end{equation}
(see Theorem \ref{theorem6}).

In order to justify some of our arguments in the  proofs we need some further  continuity properties of the Hilbert transform. More precisely, our  proof  requires
 the constant $c^*$ in \eqref{a1} to depend only on  $c(w)$  the constant describing the $A_p$ condition
(see \eqref{ap}) and on  $p$. In \cite{Pe} precise bounds for the constant $c^*$ in \eqref{apb} were given
which are sharp in  the case $p=2$ and sufficient for our purpose (see Theorem \ref{theorem7}). 
 
 It will be essential in  
our arguments that some commutator operators involving the Hilbert transform $\mathcal H$  are of   ``order zero".
More precisely, we shall use  the following estimate: 
 $\forall \,p\in(1,\infty),$ $l,\,m\in\mathbb  Z^+\cup\{0\},\,l+m\geq 1$  
$ \,\exists\, c=c(p;l;m)>0$ such that
\begin{equation}
\label{77}
 \| \partial_x^l[\mathcal H;\,a]\partial_x^m f\|_p\leq c \|\partial_x^{l+m} a\|_{\infty} \|f\|_p.
\end{equation}
 In the
case $l+m=1$,
\eqref{77} is Calder\'on's first commutator estimate
\cite{Ca}. In the  case $l+m\geq 2$,  \eqref{77} was proved in \cite{DaMcPo}.

 The rest of this paper is organized as follows: section 2 contains some preliminary estimates  to be utilized in  the coming sections.
 Theorem \ref{theorem3} will be proven in section 3. Finally, the proofs of  Theorem \ref{theorem4}
 and Theorem \ref{theorem5} 
 will be given in sections 4 and 5, respectively. 
 \end{section}

\begin{section}{Preliminary Estimates}\label{S: 2}

 We shall use the following notations:
 
 \begin{equation}
 \label{notation}
 \begin{aligned}
& \|f\|_p=(\int_{\infty}^{\infty}|f(x)|^pdx)^{1/p},\;\;\;1\leq p<\infty,\;\;\;\;\;\;\|f\|_{\infty}=\sup_{x\in\R}|f(x)|,\\
&  \|f\|_{s,2}=\|(1-\partial_x^2)^{s/2}f\|_2,\;\;\;\;s\in\R.
\end{aligned}
\end{equation}
 
 Let us first recall the definition of the $A_p$ condition. We shall restrict here to the cases $p\in(1,\infty)$ and the 1-dimensional case $\R$ (see \cite{Mu}).
 
 \begin{definition}\label{definition1} A non-negative function $w\in L^1_{loc}(\R)$ satisfies the $A_p$ inequality with $1<p<\infty\,$  if
 \begin{equation}
 \label{ap}
 \sup_{Q\;\text{interval}}\left(\frac{1}{|Q|}\int_Q w\right)\left(\frac{1}{|Q|}\int_Qw^{1-p'}\right)^{p-1}=c(w)<\infty,
 \end{equation}
 where $1/p+1/p'=1$.
 \end{definition}

  \begin{theorem}\label{theorem6}$($\cite{MuHuWh}$)$  The condition \eqref{ap} is necessary and  sufficient  for the boundedness of the Hilbert transform $\mathcal H$  in $L^p(w(x)dx)$, i.e.
 \begin{equation}
 \label{apb}
( \int_{-\infty}^{\infty}|\mathcal Hf|^pw(x)dx)^{1/p}\leq c^*\, (\int_{-\infty}^{\infty} |f|^pw(x)dx)^{1/p}.
 \end{equation}
 \end{theorem}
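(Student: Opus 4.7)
The plan is to prove the two implications separately, with necessity being essentially a testing argument and sufficiency requiring the standard machinery of weighted norm inequalities. Throughout, the hypothesis is the strong-type bound \eqref{apb}, and the conclusion in the necessity direction is the $A_p$ condition \eqref{ap} (with a constant depending on $c^*$ and $p$), while sufficiency goes the other way.

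For the necessity direction, I would test the boundedness assumption against non-negative functions supported on a fixed interval. Given $Q=[a,b]$, introduce the shifted interval $Q^* = [b, b+|Q|]$. For $x\in Q^*$ and $y\in Q$ one has $0<x-y \leq 2|Q|$, so for any $g\geq 0$ supported on $Q$,
$$\mathcal H g(x) = \frac{1}{\pi}\int_Q \frac{g(y)}{x-y}\,dy \geq \frac{1}{2\pi|Q|}\int_Q g(y)\,dy.$$
Plugging this lower bound into \eqref{apb} yields
$$\Big(\int_{Q^*} w\Big)\Big(\frac{1}{|Q|}\int_Q g\Big)^p \leq (c^*)^p \int_Q g^p\, w\,dx.$$
Choosing $g = w^{1-p'}\chi_Q$ (truncated to avoid integrability issues and then passed to the limit) and using $(1-p')p + 1 = 1-p'$ collapses the right-hand side to $\int_Q w^{1-p'}$, which gives the $A_p$-type inequality on the asymmetric pair $(Q^*, Q)$. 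Running the same argument with $Q^{**}=[a-|Q|,a]$ on the left side of $Q$ and combining the two yields the full $A_p$ condition on $Q$ with constant controlled by $(c^*)^p$.

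For the sufficiency direction I would follow the classical Coifman--Fefferman route through a good-$\la$ inequality. First I would prove the reverse H\"older inequality for $A_p$ weights: there exist $\delta>0$ and $C>0$, depending only on $c(w)$ and $p$, such that
$$\Big(\frac{1}{|Q|}\int_Q w^{1+\delta}\Big)^{1/(1+\delta)} \leq C\,\frac{1}{|Q|}\int_Q w$$
for every interval $Q$; the proof uses a Calder\'on--Zygmund stopping-time decomposition applied to $w$ itself. This self-improvement then implies that $w\in A_p$ actually lies in $A_{p-\varepsilon}$ for some $\varepsilon>0$, from which Marcinkiewicz interpolation between weak-type endpoints yields the strong bound $\|Mf\|_{L^p(w)} \leq C\|f\|_{L^p(w)}$ for the Hardy--Littlewood maximal operator. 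The core step is then the good-$\la$ inequality
$$w(\{|\mathcal H f|>2\la,\; Mf\leq \gamma\la\}) \leq C\gamma^{\delta}\, w(\{|\mathcal H f|>\la\})$$
for sufficiently small $\gamma>0$, obtained by performing a Calder\'on--Zygmund decomposition at level $\la$, splitting the kernel on each bad cube into a local piece (estimated by the weak $(1,1)$ bound for $\mathcal H$) and a far piece (estimated by $Mf$), and converting the resulting Lebesgue-measure bounds to $w$-measure bounds via the reverse H\"older exponent $\delta$. Integrating against $p\la^{p-1}\,d\la$ and absorbing yields $\|\mathcal H f\|_{L^p(w)}\leq C\|Mf\|_{L^p(w)}$, which combined with the maximal function bound closes the argument.

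The hard part will be the good-$\la$ inequality together with keeping track of how the constants depend on $c(w)$, since the whole point of quoting this theorem in the present paper is to invoke the quantitative version of \cite{Pe} recorded as Theorem \ref{theorem7}. The reverse H\"older step is where $A_p$ really enters (through the doubling property of $w$ and the self-improvement exponent $\delta$), and it is also the place where the dependence of the constant $c^*$ on $c(w)$ is most delicate; the sharp $p=2$ bound needed later is a considerably more refined statement that I would treat as a black box from \cite{Pe} rather than attempt to re-derive.
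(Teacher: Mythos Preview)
The paper does not prove this theorem; it is simply quoted from \cite{MuHuWh} as a known result, with no argument given. Your sketch is a reasonable outline of the classical Hunt--Muckenhoupt--Wheeden/Coifman--Fefferman proof (testing on $w^{1-p'}\chi_Q$ for necessity, reverse H\"older plus a good-$\lambda$ inequality for sufficiency), but there is nothing in the paper to compare it against. Note also that the paper separates the quantitative dependence of $c^*$ on $c(w)$ into Theorem~\ref{theorem7} (Petermichl), so you need not worry about tracking constants in the sufficiency argument here.
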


In the case $p=2$, a previous characterization of $w$ in \eqref{apb} was found in \cite{HeSz} (for further references and comments we refer to  \cite{Do}, \cite{GaRu}, and \cite{St2}). However, even though  we will be mainly concerned with the case $p=2$, the characterization \eqref{apb} will be the one used in our proof.
In particular, one has that in $\R$ 
\begin{equation}
\label{cond|x|}
|x|^{\alpha}\in A_p\;\;\Leftrightarrow\;\; \alpha\in (-1,p-1).
\end{equation}

In order to justify some of the arguments in the  proof of Theorem \ref{theorem3} we need some further  continuity properties of the Hilbert transform. More precisely, our  proof  requires
 the constant $c^*$ in \eqref{apb} to depend only on $c(w)$  in \eqref{ap}
 and on  $p$ (in fact,  this is only needed  for the case $p=2$). 
 
  \begin{theorem}\label{theorem7}$($\cite{Pe}$)$ For $p\in[2,\infty)$ the inequality \eqref{apb} holds with  $c^*\leq c(p)c(w)$, with $c(p)$ depending only on  $p$ and  $c(w)$ as in \eqref{ap}.  Moreover, for $p=2$ this estimate is sharp.
 \end{theorem}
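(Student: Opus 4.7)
The plan is to establish the sharp weighted estimate by first treating the endpoint $p=2$ and then extrapolating to $p\in(2,\infty)$. For $p=2$, I would use Petermichl's dyadic representation of the Hilbert transform as an average (over translations and dilations of dyadic grids $\mathcal{D}$) of ``dyadic shift'' operators $\mathbb{T}^{\mathcal{D}}$ built from Haar functions,
\[
\mathcal{H}f = c_0\int \mathbb{T}^{\mathcal{D}}f\, d\mu(\mathcal{D}),
\]
thereby reducing the problem to a uniform sharp $A_2$-bound for the much simpler operators $\mathbb{T}^{\mathcal{D}}$.

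For each dyadic shift I would prove $\|\mathbb{T}^{\mathcal{D}}f\|_{L^2(w)}\le C\,c(w)\,\|f\|_{L^2(w)}$ uniformly in $\mathcal{D}$, via one of two routes. The first is a Bellman-function argument, estimating the bilinear form $\langle \mathbb{T}^{\mathcal{D}}f,g\rangle_w$ through a concave function adapted to the $A_2$ condition. The second, and more transparent, is sparse domination: one shows a pointwise bound $|\mathbb{T}^{\mathcal{D}}f(x)|\le C\sum_{Q\in\mathcal{S}}\langle|f|\rangle_Q\mathbf{1}_Q(x)$ for some sparse family $\mathcal{S}$ of dyadic cubes and then proves $\|\sum_{Q\in\mathcal{S}}\langle f\rangle_Q\mathbf{1}_Q\|_{L^2(w)}\le C\,c(w)\,\|f\|_{L^2(w)}$ by the Carleson embedding theorem with weight. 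Averaging over $\mathcal{D}$ then yields the sharp $A_2$ bound for $\mathcal{H}$.

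For $p\in(2,\infty)$ I would invoke sharp Rubio de Francia extrapolation (following Dragicevic--Grafakos--Pereyra--Petermichl): once $T$ satisfies the $L^2(w)$-inequality with linear dependence $c(w)$, one deduces for every $p\ge 2$ and every $w\in A_p$ the bound
\[
\|Tf\|_{L^p(w)}\le C(p)\,c(w)^{\max(1,\,1/(p-1))}\,\|f\|_{L^p(w)},
\]
which equals $C(p)\,c(w)\,\|f\|_{L^p(w)}$ throughout the range $p\in[2,\infty)$. Sharpness of the $p=2$ case is standard: testing $\mathcal{H}$ against the extremal pair $w(x)=|x|^{1-\varepsilon}$ and $f(x)=|x|^{\varepsilon-1}\mathbf{1}_{(0,1)}$ and letting $\varepsilon\downarrow 0$ shows the linear power of $c(w)$ cannot be improved.

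The main obstacle is the linearity in $c(w)$ at $p=2$: every step of the reduction must preserve the constant without losing a factor of $c(w)^{\alpha}$ with $\alpha>0$. This is exactly why the older approaches (Rubio de Francia extrapolation starting from unweighted $L^p$-bounds, or interpolation between $A_p$ classes) only yield a superlinear dependence $c(w)^{\beta}$ with $\beta>1$, and why the Bellman-function or sparse-domination machinery is needed to achieve the sharp power one required by the applications to the energy estimates in the following sections.
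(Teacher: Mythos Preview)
The paper does not prove this statement: Theorem~\ref{theorem7} is simply quoted from Petermichl~\cite{Pe} and used as a black box in the sequel (specifically to obtain Proposition~\ref{proposition1}). There is therefore no proof in the paper to compare against.

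Your outline is a correct sketch of how the result is actually established. The $p=2$ case via the dyadic-shift representation and a Bellman-function argument is exactly Petermichl's original route in~\cite{Pe}; your alternative via sparse domination is a later development (Lerner, Lacey, et al.) that postdates both~\cite{Pe} and the present paper, but it is valid and arguably cleaner. The passage to $p\in(2,\infty)$ by sharp Rubio de Francia extrapolation (Dragi\v{c}evi\'c--Grafakos--Pereyra--Petermichl) is also correct and gives the stated linear dependence $c(p)c(w)$ on that range, since $\max(1,1/(p-1))=1$ for $p\ge 2$. The sharpness test with $w(x)=|x|^{1-\varepsilon}$ is the standard one.

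One minor remark: Petermichl in~\cite{Pe} handles all $p\in(1,\infty)$ directly rather than by extrapolation from $p=2$, but for the restricted range $p\in[2,\infty)$ stated here your extrapolation argument is sufficient and perhaps more economical.
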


  Next, we define the  truncated weights $w_N(x)$  using the notation  $\langle x\rangle =(1+x^2)^{1/2}$ as
\begin{equation}
\label{truncw}
 w_N(x)=
\begin{cases}
\langle x
\rangle& \text{if\,\,$|x|\le N$,}\\
2N  &  \text{if\,\,$|x|\ge 3N,$}
\end{cases}
\end{equation}
 $w_N(x) $ are smooth and non-decreasing in $|x|$ with  $w_{N}'(x)\leq 1$ for all $x\geq 0$.
  
  \begin{proposition}\label{proposition1} 
  For any $\,\theta\in (-1,1)$ and any $N\in \mathbb  Z^+$, $w^{\theta}_N(x)$ satisfies the $A_2$ inequality \eqref{ap}. Moreover, the Hilbert transform $\mathcal H$ 
 is bounded in $L^2(w^{\theta}_N(x)dx)$ with a constant depending on  $\theta$ but independent   of $N\in \mathbb Z^+$.
 \end{proposition}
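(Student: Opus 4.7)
The plan is to reduce the claim, via Theorem~\ref{theorem7}, to verifying that the $A_2$ constant $c(w_N^{\theta})$ in \eqref{ap} is bounded uniformly in $N\in\mathbb Z^+$. Since the $A_2$ product is symmetric under $\theta\mapsto -\theta$, I may assume $\theta\in[0,1)$; the case $\theta=0$ is trivial. A useful first step is the pointwise comparison
\[
w_N(x)\asymp\min(\langle x\rangle,2N),
\]
uniformly in $x\in\R$ and $N\ge 1$. This follows directly from \eqref{truncw}: on $\{|x|\le N\}$ both sides equal $\langle x\rangle$; on $\{|x|\ge 3N\}$ both equal $2N$ (since $\langle x\rangle\ge 3N$); and on the transition strip $\{N\le|x|\le 3N\}$ both lie in $[\sqrt{1+N^2},\,2N]$ and hence differ by at most a factor of $2$. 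It therefore suffices to verify $A_2$ for $v_N(x):=\min(\langle x\rangle,2N)^{\theta}$.

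Using the evenness of $v_N$ I reduce to an interval $Q=(a,b)\subset[0,\infty)$, and set $x_*:=\sqrt{4N^2-1}$, the unique positive point with $\langle x_*\rangle=2N$. I then split into three cases: (i) $Q\subset[0,x_*]$, where $v_N=\langle x\rangle^{\theta}$, so the $A_2$ product for $Q$ is bounded by $c(\langle x\rangle^{\theta})$, finite by \eqref{cond|x|}; (ii) $Q\subset[x_*,\infty)$, where $v_N\equiv(2N)^{\theta}$ and the product equals $1$; and (iii) the straddling case $a<x_*<b$, where
\[
\int_Q v_N \;=\; \int_a^{x_*}\langle x\rangle^{\theta}\,dx\;+\;(2N)^{\theta}(b-x_*),
\]
with the analogous identity for $\int_Q v_N^{-1}$.

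The main point is case (iii). Using $\langle x\rangle\le 2N$ on $[a,x_*]$ one gets directly $\int_a^{x_*}\langle x\rangle^{\theta}\,dx\le(2N)^{\theta}(x_*-a)$. The companion bound $\int_a^{x_*}\langle x\rangle^{-\theta}\,dx\le C(\theta)(2N)^{-\theta}(x_*-a)$ follows from a brief split: if $a\ge N$ then $\langle x\rangle^{-\theta}\le\langle a\rangle^{-\theta}\lesssim(2N)^{-\theta}$ pointwise, while if $a<N$ then $x_*-a\gtrsim N$ and the crude estimate $\int_0^{x_*}\langle x\rangle^{-\theta}\,dx\lesssim(2N)^{1-\theta}$ suffices. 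Combining,
\[
\int_Q v_N\le(2N)^{\theta}|Q|,\qquad \int_Q v_N^{-1}\le C(\theta)(2N)^{-\theta}|Q|,
\]
so the $A_2$ product on $Q$ is $\le C(\theta)$, uniformly in $N$. The hypothesis $\theta\in(-1,1)$ enters by making both exponents $1\pm\theta$ strictly positive, so that $\int_0^{x_*}\langle x\rangle^{\pm\theta}\,dx\asymp(2N)^{1\pm\theta}$. Collecting cases (i)--(iii) yields $c(w_N^{\theta})\le C(\theta)$, completing the proof.
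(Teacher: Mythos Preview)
Your proof is correct and follows the same strategy as the paper: verify that $c(w_N^{\theta})$ is bounded uniformly in $N$ and then invoke Theorem~\ref{theorem7}. The paper in fact leaves the uniform $A_2$ bound as an unproven assertion, so your case analysis via the comparison $w_N\asymp\min(\langle x\rangle,2N)$ simply fills in the details the paper omits.
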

 
 The proof of Proposition \ref{proposition1} follows by combining the fact that 
 for a fixed $\theta\in(-1,1)$ the family of weights $\,w^{\theta}_N(x),\,N\in \mathbb Z^+$ satisfies the $A_2$ inequality  in \eqref{ap} with a constant $c$  independent of $N$, and Theorem \ref{theorem7}.

 Next, we have the following generalization of Calder\'on commutator estimates \cite{Ca} found in \cite{DaMcPo} and 
already commented in the introduction:
 
   \begin{theorem}\label{theorem8}  
  For any $ \,p\in(1,\infty)$  and $l,\,m\in\mathbb  Z^+\cup\{0\},\,l+m\geq 1$  there exists 
$\, c=c(p;l;m)>0$ such that
 \begin{equation}
\label{777}
 \| \partial_x^l[\mathcal H;\,a]\partial_x^m f\|_p\leq c \|\partial_x^{l+m} a\|_{\infty} \|f\|_p.
\end{equation}
\end{theorem}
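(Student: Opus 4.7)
The plan is to proceed in two steps: first reduce the general $(l,m)$ estimate to the case $l=0$ by a commutation identity, then establish the base case by extending Calder\'on's first commutator estimate to arbitrary order. Since $\mathcal{H}\partial_x=\partial_x\mathcal{H}$, the Leibniz rule yields
$$
\partial_x^l[\mathcal{H},a]\partial_x^m f = \mathcal{H}\partial_x^l(a\,\partial_x^m f)-\partial_x^l(a\,\mathcal{H}\partial_x^m f)=\sum_{k=0}^l\binom{l}{k}[\mathcal{H},\partial_x^k a]\,\partial_x^{l+m-k}f.
$$
Granting the base case $\|[\mathcal{H},b]\partial_x^{m'}f\|_p\le c\|\partial_x^{m'}b\|_\infty\|f\|_p$ for $m'\ge 1$, each summand with $l+m-k\ge 1$ is bounded by $c\|\partial_x^{l+m-k}(\partial_x^k a)\|_\infty\|f\|_p=c\|\partial_x^{l+m}a\|_\infty\|f\|_p$. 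The single boundary summand $[\mathcal{H},\partial_x^l a]f$, which arises only when $m=0$ and $k=l$, is controlled directly by the $L^p$-boundedness of $\mathcal{H}$ via $\|[\mathcal{H},\partial_x^l a]f\|_p\le 2c_p\|\partial_x^l a\|_\infty\|f\|_p$.

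It remains to prove $\|[\mathcal{H},a]\partial_x^n f\|_p\le c\|\partial_x^n a\|_\infty\|f\|_p$ for every $n\ge 1$. The case $n=1$ is exactly Calder\'on's first commutator estimate \cite{Ca}. For $n\ge 2$, I would start from the integral representation
$$
[\mathcal{H},a]\partial_x^n f(x)=\frac{1}{\pi}\,\text{p.v.}\int_\R \frac{a(x)-a(y)}{x-y}\,\partial_y^n f(y)\,dy,
$$
expand $a(x)-a(y)$ by Taylor's formula about $y$ to order $n$ with integral remainder, and integrate by parts $n$ times (carefully tracking the boundary contributions at the truncation $|y-x|=\varepsilon$ as $\varepsilon\downarrow 0$). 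The polynomial portion of the Taylor expansion, once the integrations by parts are carried out and the binomial identities are unraveled, reorganizes into lower-order commutator expressions that are treated either by the inductive hypothesis or by direct application of the Calder\'on estimate. The Taylor remainder, after the substitution $t=y+s(x-y)$, yields an integral operator whose kernel is of divided-difference type in $a^{(n)}$ divided by $(x-y)$; such kernels fall within the class of higher-order Calder\'on-Coifman-McIntosh commutators, whose $L^p$-boundedness with constant proportional to $\|\partial_x^n a\|_\infty$ is classical.

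The main obstacle I expect is a purely technical one: a naive iterated integration by parts against $\partial_y^n f$ differentiates the kernel $(a(x)-a(y))/(x-y)$ $n$ times in $y$ and thereby produces an $a^{(n+1)}$, which would overshoot the regularity hypothesis by one order. The Taylor-with-remainder decomposition is precisely what is needed to recast the resulting object as a divided difference of $a^{(n)}$, thereby keeping the bound at the correct order $\|\partial_x^n a\|_\infty$. Coupled with a delicate principal-value analysis to ensure that the boundary terms either cancel symmetrically in the limit $\varepsilon\downarrow 0$ or reduce to harmless lower-order contributions, this completes the base case and, via Step~1, the full estimate \eqref{777}.
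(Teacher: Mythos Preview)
The paper does not prove this statement: it quotes the case $l+m=1$ as Calder\'on's first commutator estimate \cite{Ca} and attributes the case $l+m\ge 2$ to \cite{DaMcPo}, without reproducing any argument. So there is no in-paper proof to compare your proposal against; one can only assess whether your sketch stands on its own.

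Your Step 1 is correct and clean: the Leibniz identity
\[
\partial_x^l[\mathcal H,a]\partial_x^m f=\sum_{k=0}^l\binom{l}{k}[\mathcal H,\partial_x^k a]\,\partial_x^{l+m-k}f
\]
does reduce the general estimate to the base case $l=0$, and your treatment of the boundary summand (which indeed only appears when $m=0$) is fine.

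Step 2, however, has a real gap. Splitting $a(x)-a(y)$ into its Taylor polynomial plus the order-$n$ integral remainder and treating the two pieces separately does not work. Already for $n=2$ the linear piece contributes $\int a'(y)f''(y)\,dy$, a \emph{constant} in $x$, hence not an $L^p(\mathbb R)$ function of $x$; it is not a ``lower-order commutator expression'' in any sense, and after integration by parts it produces either $a''$ paired with $f'$ or $a'''$ paired with $f$, neither of which closes. More generally, with only $a^{(n)}\in L^\infty$ the lower derivatives $a',\dots,a^{(n-1)}$ need not be bounded, so the individual polynomial contributions need not even be well defined. If instead one keeps the pieces together and integrates by parts $n$ times, the kernel one obtains is
\[
\frac{n}{x-y}\int_0^1(1-s)^{n-1}\bigl[a^{(n)}(y+s(x-y))-a^{(n)}(y)\bigr]\,ds,
\]
which does involve only $a^{(n)}$; but this is \emph{not} a standard Calder\'on--Coifman--McIntosh commutator kernel (those require a Lipschitz bound on the function appearing, i.e.\ $a^{(n+1)}\in L^\infty$, exactly the overshoot you flag). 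Establishing the $L^p$-boundedness of this operator with constant $\|a^{(n)}\|_\infty$ is precisely the nontrivial content of the theorem, and your proposal does not supply an argument for it. The proof in \cite{DaMcPo} handles this point by a different route; your outline, as it stands, does not close.
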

\vskip.1in

We shall also use the pointwise identities
$$
[\mathcal H; x]\partial_x f = [\mathcal H; x^2]\partial_x^2 f=0,
$$
and more generally
$$
[\mathcal H; x] f =0\;\;\;\;\text{if and only if }\;\;\;\;\int fdx=0.
$$
 We recall the following   characterization of the $L^p_s(\R^n)=(1-\Delta)^{-s/2}L^p(\R^n)$ spaces given in \cite{St1}.

 \begin{theorem}
 \label{theorem9}\label{St1} 
Let $b\in (0,1)$ and $\; 2n/(n+2b)< p< \infty$. Then $f\in  L^p_b(\R^n)$ if and only if
\begin{equation}
\label{d1}
\begin{aligned}
&\;(a)\;\, f\in L^p(\R^n),\\
\\
&\;(b)\;\;\;\;\;\mathcal D^b f(x)=(\int_{\R^n}\frac{|f(x)-f(y)|^2}{|x-y|^{n+2b   }}dy)^{1/2}\in L^p(\R^n),
\end{aligned}
\end{equation}
with
 \begin{equation}
\label{d1-norm}
\|f\|_{b,p}\equiv  \|(1-\Delta)^{b/2} f\|_p=\|J^b f\|_p\simeq \|f\|_p+\|D^b    f\|_p\simeq \|f\|_p+\|\mathcal D^b       f\|_p.
\end{equation}
 \end{theorem}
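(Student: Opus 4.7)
Since this is Stein's classical characterization, the plan is to verify each equivalence in \eqref{d1-norm} separately. The equivalence $\|J^bf\|_p\simeq\|f\|_p+\|D^bf\|_p$ is a pure Fourier-multiplier statement: the symbol $m(\xi)=(1+|\xi|^2)^{b/2}(1+|\xi|^b)^{-1}$ and its reciprocal are smooth on $\R^n\setminus\{0\}$ and satisfy $|\partial^\alpha m(\xi)|\lesssim|\xi|^{-|\alpha|}$, so Mihlin--H\"ormander delivers $L^p$-boundedness for every $1<p<\infty$; in the range $p\le 1$ (which is relevant when $2n/(n+2b)<1$) one replaces $L^p$ by the real Hardy space $H^p$ and uses the corresponding Hardy-space multiplier theorem.

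The substantive content is $\|D^bf\|_p\simeq\|\mathcal{D}^bf\|_p$ modulo $\|f\|_p$. I would argue by Littlewood--Paley. Fix a partition of unity $\{\varphi_j\}_{j\in\Z}$ adapted to the dyadic annuli $|\xi|\sim 2^j$ and let $\Delta_j$ denote the associated projector. Splitting the integral defining $\mathcal{D}^bf(x)^2$ according to whether $|x-y|\le 2^{-j}$ or $|x-y|>2^{-j}$, using the Bernstein-type bound $|\Delta_jf(x)-\Delta_jf(y)|\lesssim 2^j|x-y|\,M(\Delta_jf)(x)$ on the near piece and the rapid off-diagonal decay of the kernel of $\Delta_j$ on the far piece, one arrives at the pointwise comparison
\begin{equation*}
\mathcal{D}^bf(x)^2\;\simeq\;\sum_{j\in\Z} 2^{2bj}\,|\widetilde\Delta_jf(x)|^2,
\end{equation*}
where $\widetilde\Delta_j$ is a slightly fattened projector. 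Taking $L^p$ norms and invoking the vector-valued Littlewood--Paley square-function theorem identifies $\|\mathcal{D}^bf\|_p$ with the homogeneous Triebel--Lizorkin norm $\|f\|_{\dot F^{b}_{p,2}(\R^n)}$, which for $1<p<\infty$ coincides with $\|D^bf\|_p$ by the standard Littlewood--Paley characterization of the Bessel-potential space.

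The main obstacle is precisely the pointwise square-function comparison: one has to extract the correct weight $2^{2bj}$ simultaneously from the near-diagonal contribution (via $(2^j|x-y|)^2|M\Delta_jf(x)|^2$ and the integral $\int_{|x-y|\le 2^{-j}}|x-y|^{2-n-2b}dy\simeq 2^{(2b-2)j}$) and from the far contribution (via $\int_{|x-y|\ge 2^{-j}}|x-y|^{-n-2b}dy\simeq 2^{2bj}$), then sum in $j$ in a manner compatible with the Fefferman--Stein vector-valued maximal inequality. The sharp constraint $2n/(n+2b)<p$ enters precisely when this maximal inequality, and the square-function characterization of $\dot F^{b}_{p,2}$, must be pushed into the Hardy-space regime; below this threshold the $\ell^2$-valued Hardy-Littlewood maximal operator fails to be $L^p$-bounded and Stein's characterization no longer holds.
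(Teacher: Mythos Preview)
The paper does not prove this theorem: immediately after the statement it writes ``For the proof of this theorem we refer the reader to \cite{St1}.'' So there is no in-paper argument to compare against; the result is quoted as Stein's classical characterization.

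Your outline is the standard modern route via Littlewood--Paley/Triebel--Lizorkin theory (as opposed to Stein's original argument through the Poisson-semigroup $g$-function), and the overall architecture is sound. Two points, however, are stated more sharply than they can be justified. First, the claimed two-sided \emph{pointwise} comparison $\mathcal D^b f(x)^2\simeq\sum_j 2^{2bj}|\widetilde\Delta_j f(x)|^2$ is too strong: the upper bound one actually obtains from the near/far splitting involves the maximal functions $M(\Delta_j f)(x)$, not $\widetilde\Delta_j f(x)$ itself, and the lower bound is an $L^p$-norm comparison rather than a pointwise one. This is exactly why Fefferman--Stein is needed, so the strategy survives, but the sentence as written overstates what the decomposition gives.

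Second, your explanation of the threshold $p>2n/(n+2b)$ is off. The Fefferman--Stein vector-valued maximal inequality is available for every $1<p<\infty$ (and, in the Hardy-space formulation, below $1$ as well); it does not break at $2n/(n+2b)$. The constraint enters instead through the local integrability/embedding needed for the square function $\mathcal D^b f$ to control $D^b f$ in $L^p$: one needs $b>n(1/p-1/2)_+$, equivalently $p>2n/(n+2b)$, so that the Triebel--Lizorkin identification $\dot F^{b}_{p,2}=\dot L^p_b$ and the passage between $\mathcal D^b$ and the dyadic square function are legitimate. If you revise those two points, the sketch is a correct proof of the cited result.
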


Above we have used the notation: for $s\in\R$
$$
D^s=(-\Delta)^{s/2}\;\;\;\;\;\text{with}\;\;\;\;D^s=(\mathcal H\,\partial_x)^s,\;\;\;\text{if}\;\;\;n=1.
$$

For the proof of this theorem we refer the reader to  \cite{St1}.
  One sees that from \eqref{d1} for $p=2$ and $b\in(0,1)$ one has
\begin{equation}
\label{pointwise2}
\|  \mathcal D^b      (fg)\|_2\leq \|f\, \mathcal D^b      g\|_2  +\|g\, \mathcal D^b      f\|_2.
\end{equation}
We shall use this estimate in the proof of Theorem \ref{theorem5}. As   applications of Theorem \ref{theorem9} 
we have the following estimate:

 \begin{proposition}
\label{propositionB}
 Let $b \in (0,1)$. For  any $t>0$
 \begin{equation}
 \label{pointwise-es}
  \mathcal D^b      (e^{-it x |x|})\leq c(|t|^{b/2}+|t|^b    |x|^b   ).
  \end{equation}
 \end{proposition}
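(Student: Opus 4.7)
The plan is to reduce \eqref{pointwise-es} to a universal $t=1$ bound by a simple scaling identity and then to prove that bound by splitting the defining integral for $\mathcal D^b$ at a cutoff depending on the rescaled point. Writing $f_t(x)=e^{-itx|x|}$ and noting that for $t>0$ one has $f_t(x)=f_1(t^{1/2}x)$, the change of variables $u=t^{1/2}x$, $v=t^{1/2}y$ in the integral \eqref{d1} defining $\mathcal D^b$ yields
\[
\mathcal D^b f_t(x)=t^{b/2}\,\mathcal D^b f_1(t^{1/2}x).
\]
After this rescaling, \eqref{pointwise-es} reduces to the single estimate
\[
\mathcal D^b f_1(u)\le c\,(1+|u|^b),\qquad u\in\R.
\]

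For the reduced estimate I would first record the elementary identity
\[
\bigl|\,u|u|-v|v|\,\bigr|\le(|u|+|v|)\,|u-v|,
\]
which is trivial when $u,v$ share sign and otherwise follows from $u^2+v^2\le(|u|+|v|)^2=|u-v|^2$. Combining it with the standard bounds $|e^{i\alpha}-e^{i\beta}|\le\min(2,|\alpha-\beta|)$ gives, for any $R>0$, a pointwise bound of $(|u|+|v|)^2\,|u-v|^2$ on $\{|u-v|\le R\}$ and of $4$ on its complement. Inserting these into \eqref{d1} and using $|v|\le |u|+R$ on the near region produces
\[
(\mathcal D^b f_1(u))^2\le\frac{4}{b\,R^{2b}}+\frac{(2|u|+R)^2\,R^{2-2b}}{1-b}.
\]
I would then take $R=1$ when $|u|\le 1$, which bounds both terms by an absolute constant, and $R=|u|^{-1}$ when $|u|>1$, which renders both terms comparable to $|u|^{2b}$. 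Combining the two regimes with the scaling identity recovers exactly \eqref{pointwise-es}.

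The only delicate point I anticipate is the $|u|$-dependent balance in the split bound: the first piece decays in $R$ while the second, through the prefactor $(2|u|+R)^2$, grows with $R$ in the large-$|u|$ regime, forcing $R$ to decrease with $|u|$. The scaling step isolates this balance at the rescaled point $t^{1/2}x$, so that apart from the choice of $R$ the argument is a short elementary computation; no deeper input from the Hilbert transform or from the BO equation is needed.
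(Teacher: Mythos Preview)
Your argument is correct. The scaling identity $f_t(x)=f_1(t^{1/2}x)$ and the resulting relation $\mathcal D^b f_t(x)=t^{b/2}\mathcal D^b f_1(t^{1/2}x)$ are exactly right, the elementary inequality $|u|u|-v|v||\le(|u|+|v|)|u-v|$ is verified as you indicate, and the split at radius $R$ with the two choices $R=1$ and $R=|u|^{-1}$ gives the claimed bound $\mathcal D^b f_1(u)\le c_b(1+|u|^b)$ after a short computation.

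As for comparison: the paper does not actually prove this proposition; it simply refers to \cite{NaPo}. So there is no ``paper's proof'' to weigh your approach against. Your self-contained argument, relying only on the definition \eqref{d1} of $\mathcal D^b$ and an elementary optimization in $R$, is a clean way to establish the estimate without appealing to the external reference.
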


For the proof of Proposition \ref{propositionB} we refer to \cite{NaPo}.

As a further  direct  consequence of Theorem \ref{theorem9}  we deduce the following result to be used in the proof of
 Theorem \ref{theorem5}.
  \begin{proposition}
 \label{proposition2}
 Let $\,p\in(1,\infty)$. If $f\in L^p(\R)$  such that  there exists $x_0\in\R$ for
 which $f(x_0^+),\,f(x_0^-)$ are defined and 
 $\,f(x_0^+)\neq f(x_0^-)$, then for any $\delta>0$, $\mathcal{D}^{1/p}\,f\notin L^p_{\it loc}(B(x_0,\delta))$ and consequently $\,f\notin L^p_{1/p}(\R)$.
 \end{proposition}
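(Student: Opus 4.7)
The plan is to produce a pointwise lower bound for $\mathcal{D}^{1/p}f(x)$ near the jump point and then integrate its $p$-th power to obtain divergence, after which Theorem~\ref{theorem9} gives the second conclusion for free.

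Without loss of generality I would take $x_0=0$ and set $J=f(0^+)-f(0^-)\neq 0$. Using the existence of the one-sided limits, I would choose $\delta_0\in(0,\delta)$ so small that
\[
|f(y)-f(0^+)|<|J|/4\quad\text{for }y\in(0,\delta_0),\qquad |f(y)-f(0^-)|<|J|/4\quad\text{for }y\in(-\delta_0,0).
\]
Then by the triangle inequality, whenever $x\in(0,\delta_0)$ and $y\in(-\delta_0,0)$, one has $|f(x)-f(y)|\geq |J|/2$.

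Next I would bound $\mathcal{D}^{1/p}f$ from below by discarding all of the defining integral except the contribution of this slab. For $x\in(0,\delta_0)$,
\[
\bigl(\mathcal{D}^{1/p}f(x)\bigr)^{2}\;\geq\;\int_{-\delta_0}^{0}\frac{|f(x)-f(y)|^{2}}{|x-y|^{1+2/p}}\,dy\;\geq\;\frac{|J|^{2}}{4}\int_{x}^{x+\delta_0}\frac{dz}{z^{1+2/p}}\;=\;\frac{p|J|^{2}}{8}\bigl(x^{-2/p}-(x+\delta_0)^{-2/p}\bigr),
\]
after the substitution $z=x-y$. For $x$ small this yields $\mathcal{D}^{1/p}f(x)\geq c|J|\,x^{-1/p}$ with some $c>0$, and therefore
\[
\int_{B(0,\delta)}\bigl(\mathcal{D}^{1/p}f(x)\bigr)^{p}\,dx\;\geq\;c^{p}|J|^{p}\int_{0}^{\delta_0/2}\frac{dx}{x}\;=\;\infty,
\]
which proves $\mathcal{D}^{1/p}f\notin L^{p}_{\mathit{loc}}(B(0,\delta))$. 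Once this is established, the ``consequently'' part is immediate: Theorem~\ref{theorem9} applies since the condition $2n/(n+2b)<p$ reduces to $2p/(p+2)<p$, i.e.\ $p>0$, which holds for all $p\in(1,\infty)$; so $f\in L^{p}_{1/p}(\mathbb{R})$ would force $\mathcal{D}^{1/p}f\in L^{p}(\mathbb{R})$, contradicting what we just showed.

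There is no genuine analytic obstacle here; the only point that requires care is verifying that the hypotheses of Theorem~\ref{theorem9} are met (so that the Stein characterization of $L^{p}_{1/p}$ through $\mathcal{D}^{1/p}$ is applicable), and choosing $\delta_0$ small enough to separate $f$ from its two one-sided values by more than a fixed fraction of $|J|$. The rest is the elementary computation above, whose key feature is that the exponent $1+2/p$ in the Riesz-type kernel is precisely the critical one that turns the $x^{-1/p}$ lower bound into a nonintegrable singularity for $|\mathcal{D}^{1/p}f|^{p}$.
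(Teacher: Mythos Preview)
Your argument is correct and complete. The paper itself does not supply a proof of this proposition --- it merely asserts the result as a ``direct consequence'' of Theorem~\ref{theorem9} --- and your computation is precisely the natural way to make that assertion rigorous: the pointwise lower bound $\mathcal{D}^{1/p}f(x)\gtrsim |J|\,|x-x_0|^{-1/p}$ near the jump, obtained by restricting the defining integral to the opposite side of $x_0$, produces the critical non-integrable singularity for $|\mathcal{D}^{1/p}f|^{p}$, and then Stein's characterization in Theorem~\ref{theorem9} (whose hypotheses you correctly verified) yields $f\notin L^{p}_{1/p}(\mathbb{R})$.
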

 
Also as consequence of  the estimate \eqref{pointwise2} one has  the following 
interpolation inequality.

\begin{lemma}
\label{lemma1}
Let $a,\,b>0$. Assume that $ J^af=(1-\Delta)^{a/2}f\in L^2(\mathbb R)$ and \newline $\langle x\rangle^{b}f=
(1+|x|^2)^{b/2}f\in L^2(\mathbb R)$. Then for any $\theta \in (0,1)$
\begin{equation}
\label{complex}
\|J^{ \theta a}(\langle x\rangle^{(1-\theta) b} f)\|_2\leq c\|\langle x\rangle^b f\|_2^{1-\theta}\,\|J^af\|_2^{\theta}.
\end{equation}
 Moreover, the inequality \eqref{complex} is still valid with $w_N(x)$ in \eqref{truncw} instead of $\langle x\rangle$ with a
constant $c$ independent of $N$.
\end{lemma}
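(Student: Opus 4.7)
\textbf{Proof plan for Lemma \ref{lemma1}.} The plan is to prove \eqref{complex} by Hadamard's three-lines theorem applied to an analytic family of vectors in $L^2$. By a density argument it suffices to consider $f\in\mathcal{S}(\R)$. Fix $g\in L^2(\R)$ with $\|g\|_2\le 1$ and a parameter $\delta>0$ (to be absorbed at the end), and for $z$ in the closed strip $\overline S=\{0\le\mathrm{Re}\,z\le 1\}$ define
$$
\Phi(z)=e^{\delta(z^2-z)}\,\int_\R J^{za}\big(\langle x\rangle^{(1-z)b}f\big)(x)\,\overline{g(x)}\,dx .
$$
Because $f$ is Schwartz and the symbols $(1+|\xi|^2)^{za/2}$ and $\langle x\rangle^{(1-z)b}$ are analytic in $z$ with locally uniform bounds, $\Phi$ is continuous on $\overline S$ and holomorphic in its interior; the factor $e^{\delta(z^2-z)}$ has modulus $e^{-\delta t^2}$ on both boundary lines $z=it$ and $z=1+it$ and will absorb any polynomial-in-$t$ growth.

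On $\mathrm{Re}\,z=0$, since $|\langle x\rangle^{-itb}|\equiv 1$ and $J^{ita}$ is a unitary $L^2$-multiplier (its symbol has modulus $1$), Cauchy--Schwarz gives $|\Phi(it)|\le e^{-\delta t^2}\|\langle x\rangle^b f\|_2$. On $\mathrm{Re}\,z=1$, factor $J^{(1+it)a}=J^{ita}J^a$ and use that $J^{ita}$ is an $L^2$-isometry to get $|\Phi(1+it)|\le e^{-\delta t^2}\|J^a(\langle x\rangle^{-itb}f)\|_2$. The symbol $m(x)=\langle x\rangle^{-itb}=e^{-itb\log\langle x\rangle}$ is smooth with $\|m\|_\infty=1$ and $\|\partial^k m\|_\infty\le C_k(1+|t|)^k$ for every $k\ge 1$ (because $\log\langle x\rangle$ has bounded derivatives of all positive orders). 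Writing $J^a(mf)=mJ^af+[J^a,m]f$ and invoking a Kato--Ponce/Moser commutator estimate (the commutator $[J^a,m]$ being a pseudodifferential operator of order $a-1$), one finds
$$
\|J^a(\langle x\rangle^{-itb}f)\|_2\le C_a(1+|t|)^{N(a)}\|J^af\|_2 ,
$$
so that $|\Phi(1+it)|\le C_a e^{-\delta t^2}(1+|t|)^{N(a)}\|J^af\|_2\le C_a'\|J^af\|_2$ uniformly in $t$.

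Applying Hadamard's three-lines theorem to $\Phi$ on $\overline S$ yields $|\Phi(\theta)|\le\|\langle x\rangle^b f\|_2^{1-\theta}(C_a'\|J^af\|_2)^\theta$. Since $\Phi(\theta)=e^{\delta\theta(\theta-1)}\langle J^{\theta a}(\langle x\rangle^{(1-\theta)b}f),g\rangle$ and $|e^{\delta\theta(\theta-1)}|\ge e^{-\delta/4}$, taking the supremum over $\|g\|_2\le 1$ yields \eqref{complex}. For the $w_N$-version the identical argument applies, provided $w_N$ is chosen (in the transition region $N\le|x|\le 3N$) so that $\|\partial^k\log w_N\|_\infty\le C_k$ uniformly in $N$. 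This is possible: on $|x|\le N$ one has $w_N=\langle x\rangle$, on $|x|\ge 3N$ one has $w_N\equiv 2N$, and a smooth interpolation with $|w_N^{(k)}|\lesssim N^{1-k}$ makes the derivatives of $\log w_N$ uniformly bounded; the commutator bound above then holds with constants independent of $N$.

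The main obstacle is the multiplier bound $\|J^a(\langle x\rangle^{-itb}f)\|_2\le C_a(1+|t|)^{N(a)}\|J^af\|_2$ for non-integer $a$: one needs a polynomial control in $|t|$ for multiplication by a smooth unimodular function whose derivatives grow polynomially. Once this is established the Gaussian factor $e^{-\delta t^2}$ built into $\Phi$ kills the polynomial, and the classical three-lines theorem finishes the proof.
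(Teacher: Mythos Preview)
Your approach is essentially the paper's: both apply the three-lines theorem to the analytic family $z\mapsto J^{za}(\rho^{(1-z)b}f)$ paired with a test function, with a Gaussian damping factor to absorb polynomial growth on the boundary lines. The step you flag as the main obstacle---the polynomial-in-$|t|$ bound for $\|J^a(\rho^{-itb}f)\|_2$---is exactly what the paper fills in, not via a Kato--Ponce black box but using Stein's pointwise characterization (Theorem~\ref{theorem9}) and the product inequality \eqref{pointwise2}: writing $a=1+\alpha$ with $\alpha\in(0,1)$, one expands $\partial_x(\rho^{-itb}f)$ by Leibniz and applies $\mathcal D^\alpha$ term by term, the needed uniform control coming from $|\rho'/\rho|+|\rho''/\rho|\le c$ (independent of $N$) together with $\|\mathcal D^\alpha h\|_\infty\le c_\alpha(\|h\|_\infty+\|\partial_x h\|_\infty)$, which yields growth $(1+|bt|^2)$ on $\mathrm{Re}\,z=1$.
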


 \begin{proof}
It will suffice to consider the case : $a=1+\alpha,\,\alpha\in (0,1)$. We denote by $\rho(x)$ a function equal to $\langle x\rangle$ or equal to $w_N(x)$ as in \eqref{truncw} and consider the function
$$
F(z)=\;e^{(z^2-1)} \int_{-\infty}^{\infty} \,J^{az}(\rho ^{b(1-z)} f(x) )\,\overline{g(x)}dx
$$
with $\,g\in L^2(\mathbb R^n)$ with $\|g\|_2=1$, which is continuous  in $\{z=\eta+iy\,:\,0\leq \eta\leq 1\}$ and analytic in its interior.  Moreover,
$$
|F(0+iy)|\leq e^{-(y^2+1)} \,\|\rho^b\,f\|_2,
$$
and since $|\rho'/\rho|+|\rho''/\rho|\leq c$ (independent of $N$) combining \eqref{d1} and \eqref{pointwise2} one has 
$$
\aligned
&|F(1+iy)|\leq e^{-y^2} \|  J^a(\rho^{iby} f)\|_2 \leq e^{-y^2}(\|\rho^{iby} f\|_2 +\| D^{\alpha}\partial_x(\rho^{iby} f)\|_2)\\
&\leq e^{-y^2}(\| f\|_2 +\| D^{\alpha} (\rho^{iby}\partial_x f)\|_2 + |by| \,\| D^{\alpha} (\rho^{iby-1}\rho'\, f)\|_2)\\
&\leq e^{-y^2}(\| f\|_2 +\| \mathcal D^{\alpha} (\rho^{iby}\partial_x f)\|_2 + |by| \,\| \mathcal D^{\alpha} (\rho^{iby-1}\rho'\, f)\|_2)\\
&\leq e^{-y^2}(\| f\|_2 +\| \mathcal D^{\alpha} (\rho^{iby}) \partial_x f\|_2 + \|  (\rho^{iby}) \mathcal D^{\alpha}\partial_x f \|_2\\
&\;\; \;\;\;+
|by| \,\| \mathcal D^{\alpha} (\rho^{iby-1}\rho')\, f \|_2 +
|by| \,\|  (\rho^{iby-1}\rho')\,  \mathcal D^{\alpha} f \|_2)\\
&\leq c_{\alpha}\,e^{-y^2} \,(1+|yb|^2)(\|f\|_2 + \|\mathcal D^{\alpha} f\|_2+\|\partial_x f\|_2 +\| \mathcal D^{\alpha}\partial_x f\|_2)\\ 
&\leq c_{\alpha}\,e^{-y^2} \,(1+|yb|^2)\|J^{1+\alpha}f\|_2= c_{\alpha}\,e^{-y^2} (1+|yb|^2) \|J^{a}f\|_2,
\endaligned
$$
using that for $\alpha\in(0,1)$
$$
\|\mathcal D^{\alpha} h\|_{\infty}\leq c_{\alpha}(\|h\|_{\infty}+\|\partial_xh\|_{\infty}).
$$
Therefore, the three lines theorem yields the desired result.
\end{proof}

 We shall also employ the following simple estimate.
 
 \begin{proposition}
 \label{proposition3}
 If $f\in L^2(\R)$ and $\phi\in H^1(\R)$, then
 \begin{equation}
 \label{simplein}
 \| [D^{1/2};\phi] f\|_2\leq c\|\phi\|_{1,2} \|f\|_2.
 \end{equation}
 \end{proposition}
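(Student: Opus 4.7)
The strategy is to work on the Fourier side. By Plancherel,
$$
\|[D^{1/2};\phi]f\|_2^2 = \int_\R \Big| \int_\R (|\xi|^{1/2}-|\eta|^{1/2})\,\widehat{\phi}(\xi-\eta)\,\widehat{f}(\eta)\,d\eta \Big|^2 d\xi,
$$
and the symbol obeys $||\xi|^{1/2}-|\eta|^{1/2}|=||\xi|-|\eta||/(|\xi|^{1/2}+|\eta|^{1/2})\leq |\xi-\eta|/(|\xi|^{1/2}+|\eta|^{1/2})$. A direct application of Young's inequality fails because $|\cdot|^{1/2}\widehat{\phi}$ need not lie in $L^1$ for $\phi\in H^1$ (e.g.\ $\widehat{\phi}(\xi)=\langle\xi\rangle^{-3/2}(\log\langle\xi\rangle)^{-1}$ on $|\xi|\geq 2$), so the bound must be closed by a frequency-adapted splitting of the $\eta$-integration into the regions (a) $|\eta|\geq|\xi|/2$ and (b) $|\eta|<|\xi|/2$.

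In region (a), $|\xi|^{1/2}+|\eta|^{1/2}\geq|\eta|^{1/2}$, so the symbol is controlled by $|\xi-\eta|/|\eta|^{1/2}$. Cauchy--Schwarz in $\eta$ pairing $|\xi-\eta|\widehat{\phi}(\xi-\eta)$ with $\widehat{f}(\eta)/|\eta|^{1/2}$ gives pointwise in $\xi$ the bound $\|\phi'\|_2\,(\int_{|\eta|\geq|\xi|/2}|\widehat{f}(\eta)|^2/|\eta|\,d\eta)^{1/2}$. Squaring, integrating in $\xi$, and swapping the order by Fubini, the set $\{\xi:|\xi|\leq 2|\eta|\}$ of measure $4|\eta|$ cancels the $|\eta|^{-1}$ weight and leaves $4\|\phi'\|_2^2\|f\|_2^2$.

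In region (b), $|\xi-\eta|\in(|\xi|/2,3|\xi|/2)$, so $|\xi-\eta|\sim|\xi|$, and the symbol is bounded by $|\xi|^{-1/2}|\xi-\eta|$. Cauchy--Schwarz in $\eta$ pairing $|\xi-\eta|\widehat{\phi}(\xi-\eta)$ with $\widehat{f}(\eta)$, followed by the change of variables $\zeta=\xi-\eta$ and Fubini, produces a bound of the form
$$
\|f\|_2^2 \int|\zeta|^2|\widehat{\phi}(\zeta)|^2 \int_{|\xi|\sim|\zeta|}|\xi|^{-1}d\xi\,d\zeta,
$$
where the inner $\xi$-integral (taken over $2|\zeta|/3\leq|\xi|\leq 2|\zeta|$) equals the absolute constant $2\log 3$; this contribution is therefore controlled by $c\|\phi'\|_2^2\|f\|_2^2$.

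Adding the two regions yields $\|[D^{1/2};\phi]f\|_2\leq c\|\phi'\|_2\|f\|_2\leq c\|\phi\|_{1,2}\|f\|_2$. The main difficulty is the borderline nature of the estimate: the H\"older bound $||\xi|^{1/2}-|\eta|^{1/2}|\leq|\xi-\eta|^{1/2}$ combined only with $\phi\in H^1$ just fails to give a convergent Young convolution bound, so the argument closes only by exploiting the sharper quotient form of the symbol, which provides the extra $|\eta|^{-1/2}$ or $|\xi|^{-1/2}$ factor needed to convert the log-type divergence into a Fubini cancellation.
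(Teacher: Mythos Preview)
Your argument is correct. The paper itself does not actually supply a proof of this proposition: it is simply announced as a ``simple estimate'' and then used later in Sections~4 and~5. So there is nothing to compare against on the level of method.

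A few remarks on your write-up. Your proof in fact establishes the sharper homogeneous bound $\|[D^{1/2};\phi]f\|_2\leq c\,\|\phi'\|_2\,\|f\|_2$, which is stronger than \eqref{simplein}. The frequency splitting $|\eta|\gtrless|\xi|/2$ is the natural way to exploit the quotient form $||\xi|^{1/2}-|\eta|^{1/2}|\leq|\xi-\eta|/(|\xi|^{1/2}+|\eta|^{1/2})$ of the symbol, and your Fubini-type cancellation in each region is carried out cleanly; the only step worth making explicit for a reader is that after splitting the inner integral one should invoke $(a+b)^2\leq 2a^2+2b^2$ before squaring and integrating in $\xi$, so that the two regions can be estimated separately. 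Your observation that the naive H\"older bound $||\xi|^{1/2}-|\eta|^{1/2}|\leq|\xi-\eta|^{1/2}$ just fails under Young's inequality (since $|\cdot|^{1/2}\widehat{\phi}\notin L^1$ in general for $\phi\in H^1$) is exactly the reason the estimate is borderline, and is a useful comment to retain.
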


 Finally, to complete this section we recall the result obtained in \cite{Po} 
 concerning  regularity properties of the solutions of the IVP \eqref{BO} 
 with data $u_0\in H^s(\R),\,s\geq 3/2$. This will be used in the proof of Theorem \ref{theorem5}.
 
 \begin{theorem}
 \label{theorem10}
For any $u_0\in H^s(\R)$ with $s\geq 3/2$ the IVP \eqref{BO} has a unique global solution $u\in
C([0,T]:H^s(\R))$ such that for any $T>0$ 
$$
J^{s+1/2} u \in l^{\infty}_k(L^2([k,k+1]\times[0,T])),\;\;Ju\in l^{2}_k(L^{\infty}([k,k+1]\times[0,T]))
$$ 
and
$$
J^{s-3/2}\partial_x u\in L^4([0,T]:L^{\infty}(\R)).
$$

\end{theorem}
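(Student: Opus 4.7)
The plan is to follow the scheme of Kato-type smoothing plus energy method that Ponce used in \cite{Po}. First I would establish linear estimates for the BO unitary group $U(t)=e^{-it\mathcal H\partial_x^2}$: (i) the Kato-type local smoothing $\|D^{1/2}U(t)v_0\|_{L^\infty_xL^2_T}\lesssim \|v_0\|_2$, proved via Plancherel in $t$ and a change of variables in the symbol $\xi|\xi|$; (ii) a maximal-function bound of the form $\|U(t)v_0\|_{\ell^2_k(L^\infty([k,k+1]\times[0,T]))}\lesssim \|J^{3/4+}v_0\|_2$, which comes from decay estimates on the kernel of $U(t)$ after Littlewood--Paley decomposition together with a $TT^*$ argument; and (iii) the Strichartz-type estimate $\|D^{-3/2}\partial_x U(t)v_0\|_{L^4_TL^\infty_x}\lesssim \|v_0\|_2$, derived by interpolating between the trivial $L^2$ bound and a decay estimate of the oscillatory integral defining $U(t)$. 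The corresponding inhomogeneous versions follow by Minkowski and duality.

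Next I would introduce a function space $X^s_T$ whose norm packages (i)--(iii) together with $\|u\|_{L^\infty_TH^s_x}$, apply Duhamel,
\begin{equation*}
u(t)=U(t)u_0-\int_0^t U(t-t')\,(u\partial_x u)(t')\,dt',
\end{equation*}
and try to run a fixed point. Since for BO the data-to-solution map is not $C^2$ (see \cite{MoSaTz}), one cannot close a contraction directly in the Sobolev scale; the standard device is to smooth the data ($u_0^\epsilon=\rho_\epsilon*u_0$), derive uniform-in-$\epsilon$ bounds in $X^s_T$ via an energy estimate together with (i)--(iii), and pass to the limit by a Bona--Smith compactness argument. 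The energy method here starts from
\begin{equation*}
\tfrac12\tfrac{d}{dt}\|J^s u\|_2^2=-\langle J^s(u\partial_x u),J^s u\rangle,
\end{equation*}
and after integration by parts and a Kato--Ponce commutator estimate the right-hand side is bounded by $\|\partial_x u\|_\infty\,\|J^s u\|_2^2$, which is controlled after time integration by $\|\partial_x u\|_{L^1_TL^\infty_x}$.

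The core new ingredient, which is the main obstacle, is precisely this $L^1_TL^\infty_x$ control of $\partial_x u$ for $s\ge 3/2$. Since the dispersive smoothing of $U(t)$ gives only $1/2$ of a derivative, one has to combine the maximal bound (ii) applied to $Ju$ with the Strichartz estimate (iii) applied to $J^{s-3/2}\partial_x u$: the latter is exactly the third bound claimed in the theorem and gives $\|\partial_x u\|_{L^4_TL^\infty_x}$ up to $J^{s-3/2}$, while Cauchy--Schwarz in $t$ converts it to $L^1_T$. Once the a priori bound in $X^s_T$ is closed for short time with $T=T(\|u_0\|_{s,2})$, uniqueness follows from the same energy argument applied to the difference of two solutions.

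Finally, global existence is obtained by iterating the local result using the conservation laws $I_2$ and $I_3$ in \eqref{laws}: they yield $L^\infty_t$-control of $\|u(t)\|_{1/2,2}$, which together with the Brezis--Gallou\"et/Gronwall inequality bootstraps to $\|u(t)\|_{s,2}$ for $s\ge 3/2$, ruling out blow-up in finite time. The three smoothing properties listed in the theorem then transfer from the linear estimates (i)--(iii) to the full solution via the Duhamel formula and the local-in-time $X^s_T$ bound, patched together over intervals $[0,T]$.
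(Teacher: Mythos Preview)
The paper does not prove this theorem; it is quoted verbatim from \cite{Po} and used only as a black box (in Section~4, to bound the term $\mathcal A_1$). So there is no ``paper's own proof'' to compare against beyond the reference itself.

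Your outline is essentially the strategy of \cite{Po}: sharp local smoothing, a maximal-function estimate, and a Strichartz-type $L^4_TL^\infty_x$ bound for the group $U(t)$, combined with the energy method and a Kato--Ponce commutator estimate to control $\|\partial_x u\|_{L^1_TL^\infty_x}$, together with a regularization/compactness argument in place of a direct contraction. Two places where your sketch would need tightening before it actually closes at the endpoint $s=3/2$: (a) the loss in your maximal estimate (ii) is stated as $3/4+$, but the claim $Ju\in\ell^2_k(L^\infty(Q_k^T))$ with $u_0\in H^{3/2}$ forces the loss to be at most $1/2$; the estimate in \cite{Po} has the correct exponent, so be sure yours matches. (b) For global existence, $I_2$ and $I_3$ only control $\|u\|_{1/2,2}$; at $s\ge 3/2$ one uses the higher BO conserved quantities (which control $\|u\|_{k/2,2}$) rather than a Brezis--Gallou\"et bootstrap from $H^{1/2}$. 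With those two corrections your plan matches \cite{Po}.
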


 \end{section}

 \begin{section}{Proof of Theorem \ref{theorem3} }\label{S: 3}
 
 We consider several cases :
 
 \underline{Case 1:  $s=1$ and $r=\theta\in(0,1]$}. Part (i) in Theorem \ref{theorem3}:

 We multiply the differential equation by $w_N^{2\theta} u$ 
 (see \eqref{truncw}) with $ 0<\theta\le1$ and integrate  on $\R$ to obtain
\begin{equation}
\label{eq2}
\frac{1}{2}\frac{d}{dt}\int{(w_N^{\theta}u)^2\,dx}+\int{w_N^{\theta}\mathcal H \partial_x^2u \,w_N^{\theta}u\,dx}+\int{w_N^{2\theta}u^{2}\partial_x u\,dx}=0.
\end{equation}

 To handle the second term on the  left hand side (l.h.s.) of \eqref{eq2} we write
$$
\aligned
 w_N^{\theta}\mathcal H \partial_x^2u &= [ w_N^{\theta};\mathcal H] \partial_x^2u +
\mathcal H(w_N^{\theta}\partial_x^2u)\\
&= A_1 + \mathcal H\partial_x^2(w_N^{\theta}u)
-2 \mathcal H(\partial_xw_N^{\theta} \partial_xu) -\mathcal H(\partial^2_xw_N^{\theta} u)\\
&=A_1+A_2+A_3+A_4.
\endaligned
$$
We observe that by Theorem \ref{theorem8} and our assumption on $\theta\in(0,1]$ the terms 
$A_1, A_4$ are bounded by the $L^2$-norm  of the solution $u$  and $A_3$ is bounded by the $H^1$-norm of the solution with constants independent of $N$, thus they are bounded uniformly on  $N\in \mathbb Z^+$ by
$$
M_1=\sup_{t\in[0,T]}\|u(t)\|_{1,2}.
$$
We insert the  term $A_2$ in \eqref{eq2} and use  integration by parts, to get that
$$
\int \mathcal H\partial_x^2(w_N^{\theta}u) w_N^{\theta} u dx=0.
$$
Finally, using integration by parts, we bound the nonlinear term (the third term on the l.h.s.) in \eqref{eq2} as
\begin{equation}
\label{nonlinear1}
| \int w_N^{2\theta}u^{2}\partial_x u\,dx|\leq c\|u\|_{\infty}\|u\|_2 \|w_N^{\theta} u\|_2
\leq c \|u\|^2_{1,2} \|w_N^{\theta} u\|_2.
\end{equation}

Inserting this information in \eqref{eq2} we get
$$
\frac{d}{dt}\|w_N^{\theta}u(t)\|_2 \leq cM,\;\;\;\;\text{with}\;\;c\;\text{independent of}\;\;N,
$$
which tells us that
$$
\sup_{t\in[0,T]}\| w_N^{\theta}u(t)\|_2\leq c\|\langle x\rangle^{\theta} u_0\|_2\,e^{TM},\;\;\;\;\text{with}\;\;c\;\text{independent of}\;\;N,
$$
which yields the result $u\in  L^{\infty}([0,T]: L^2(|x|^{2\theta}))$ for any $T>0$. 

To see that  $u\in  C([0,T]: L^2(|x|^{2\theta}))$ 
one considers the sequence 
$$
( w_N^{\theta}u)_{N\in\mathbb Z^+}\subseteq C([0,T]: L^2(\R)),
$$
 and reapply the above argument to find that it is a Cauchy sequence.
 
 Finally, we point out that the use of the differential equation in \eqref{BO} can be justified by  the locally continuous dependence of the solution upon the data from $H^s(\R)$ to $C([0,T] : H^s(\R))$.
 
\vskip.1in
 \underline{Case 2:  $s\in (1,2]$ and $r=s$}.  Part (i) in Theorem \ref{theorem3}.
 
 We multiply the differential equation by $w_N^{2+2\theta} u$ 
 (see \eqref{truncw}) with $ 0\le\theta\le1$ and integrate  on $\R$ to obtain
\begin{equation}
\label{eq4}
\frac{1}{2}\frac{d}{dt}\int{(w_N^{1+\theta}u)^2\,dx}+\int{w_N^{1+\theta}\mathcal H \partial_x^2u \,w_N^{1+\theta}u\,dx}+\int{w_N^{2+2\theta}u^{2}\partial_x u\,dx}=0.
\end{equation}

To control the second term on the  l.h.s. of \eqref{eq4} we write
$$
\aligned
 w_N^{1+\theta}\mathcal H \partial_x^2u &= [ w_N^{1+\theta};\mathcal H] \partial_x^2u +
\mathcal H(w_N^{1+\theta}\partial_x^2u)\\
&= B_1 + \mathcal H\partial_x^2(w_N^{1+\theta}u)
-2 \mathcal H(\partial_xw_N^{1+\theta} \partial_xu) -\mathcal H(\partial^2_xw_N^{1+\theta} u)\\
&=B_1+B_2+B_3+B_4.
\endaligned
$$
We observe that by Theorem \ref{theorem8} and our assumption  $\theta\in(0,1]$ the terms 
$B_1, B_4$ are bounded by the $L^2$-norm  of the solution. Inserting the term $B_2$ on the  
\eqref{eq4} and using integration by parts one finds that its contribution is null. So it remains to control $B_3=
-2 \mathcal H(\partial_xw_N^{1+\theta} \partial_xu)$. Since
$$
|\partial_x w_{N}^{1+\theta}|=|(1+\theta)w_N^{\theta}\,\partial_x w_N| \leq c\,w_N^{\theta},\;\;\;\;c\;\;\text{independent of }\;\;N,
$$
one has
\begin{equation}
\label{22}
\|B_3\|_2\leq c\,\|\,w_N^{\theta} \partial_x u\|_2 \leq c  \|\partial_x(w_N^{\theta} u)\|_2
+ c \| \partial_x w_N^{\theta}  u\|_2 \leq c  \|\partial_x(w_N^{\theta} u)\|_2 +c \|u\|_2.
 \end{equation}
 Then by the  interpolation inequality in \eqref{complex} it follows that
\begin{equation}
\label{int1}
\|\partial_x(w_N^{\theta} u)\|_2 \leq \| J(w_N^{\theta}  u)\|_2
\leq c \| w_{N}^{1+\theta}u\|_2^{\theta/(1+\theta)}    \|J^{1+\theta}u\|_2^{1/(1+\theta)},
\end{equation}
with a constant $c$ independent of $N$. So by Young's inequality  in \eqref{int1} and \eqref{22} the term $B_3$ is 
appropriately bounded. Finally, for the last term on the l.h.s. of \eqref{eq4} we write
\begin{equation}
\label{nonlinear2}
| \int w_N^{2+2\theta}u^{2}\partial_x u\,dx|\leq c\|u\|_{\infty} \|w_N^{1+\theta} u\|_2^2
\leq c\|u\|_{1,2} \|w_N^{1+\theta} u\|^2_2,
\end{equation}
with $c$ independent of $N$.

So inserting the above information in \eqref{eq4} we obtain the result.

\vskip.1in

 \underline{Case 3: $s\in (9/8,2]$ and $r=s$}.  Part (ii)  in Theorem \ref{theorem3}.
 
 In this case it remains to establish the continuous dependence of the solution $C([0,T]:Z_{s,r})$ upon the data in $Z_{s,r}$. We are considering the most interesting case $s=r\in(9/8,2]$. Suppose that $u,\,v\in C([0,T]:Z_{s,s})$
 are two solutions of the BO equation in \eqref{BO} corresponding to data $u_0,\,v_0$ respectively.
 Hence,
 \begin{equation}
 \label{dossol}
 \partial_t(u-v) +\mathcal H \partial_x (u-v) + \partial_xu (u-v) +v \partial_x(u-v)=0.
 \end{equation}
 We will reapply the argument used in the previous case. However, we notice that the nonlinear term in \eqref{dossol}
 is different than that in \eqref{eq4}.
 So we recall the result in \cite{KeKo} which affirms that for $s>9/8$
\begin{equation} 
\label{est1inf}
\partial_xu,\,\partial_xv\in L^1([0,T]:L^{\infty}_x(\R)),
\end{equation}
and use integration by parts to obtain that
\begin{equation}
\label{nonlineardiff}
\begin{aligned}
&| \int w_N^{2+2\theta}(\partial_xu (u-v)^2 +v (u-v) \partial_x(u-v))\,dx|\\
&\leq 
c(\|\partial_x u(t) \|_{\infty} +\|\partial_x v(t)\|_{\infty}+\|v(t)\|_{\infty})\, \|w_N^{1+\theta}(u-v)\|_2^2.
\end{aligned}
\end{equation}

Hence,  combining  the argument in the previous section, the estimates \eqref{nonlineardiff} 
and \eqref{est1inf}, and the continuous dependence of the solution in $C([0,T]: H^s(\R))$ upon the 
data in $H^s(\R)$ the desired result follows. 

 \vskip.1in
 
  \underline{Case 4: $s=r\in (2,5/2)$}.  Part (ii) in Theorem \ref{theorem3}. 
  
 We recall that from the previous cases we know the result for $s\geq r\in (0,2]$.
 Also we shall write $r=2+\theta,\,\,\theta\in(0,1/2)$, and we multiply the differential equation by $x^2 w_N^{2+2\theta} u$ 
 (see \eqref{truncw})  and integrate  on $\R$ to obtain
\begin{equation}
\label{eq5}
\begin{aligned}
&\frac{1}{2}\frac{d}{dt}\int{( w_N^{1+\theta}  x u)^2\,dx}\\
&+\int{w_N^{1+\theta}\,x\,\mathcal H \partial_x^2u \,w_N^{1+\theta}\,x\,u\,dx}+\int{x^2 w_N^{2+2\theta}u^{2}\partial_x u\,dx}=0.
\end{aligned}
\end{equation}
From our previous proofs it is clear that we just need to handle the second term on the l.h.s. of \eqref{eq5}.
First we write the identity 
\begin{equation}
\label{iden1}
x\,\mathcal H \partial_x^2 u=\mathcal H(x\partial_x^2 u)=\mathcal H(\partial_x^2(xu)) - 2\mathcal H\partial_xu
=E_1+E_2.
\end{equation}
  
  To bound the contribution of the term $E_2$ inserted in \eqref{eq5} we shall use that $w_N^{\theta}$ with $\theta\in(0,1/2)$ satisfies the $A_2$ inequality 
  uniformly in $N$ (see Proposition \ref{proposition1}) so
  \begin{equation}
  \label{eq7}
  \begin{aligned}
\|w_N^{1+\theta} E_2\|_2&=2 \| w_N^{1+\theta}\,\mathcal H\partial_x u\|_2\leq  c\| w_N^{\theta}\,\mathcal H\partial_x u\|_2
+ c\| w_N^{\theta}\,x\,\mathcal H\partial_x u\|_2\\
&  \leq  c\| w_N^{\theta}\,\partial_x u\|_2
+ c\| w_N^{\theta}\,\mathcal H(x\partial_x u)\|_2\\
&  \leq  c\| w_N^{\theta}\,\partial_x u\|_2
+ c\| w_N^{\theta}\,x \,\partial_x u\|_2=F_1+F_2.
\end{aligned}
\end{equation}
  
  Now using complex interpolation one gets (see Lemma \ref{lemma1})
  \begin{equation}
  \label{eq8}
  \begin{aligned}
 \| w_N^{\theta}\,\partial_x u\|_2&\leq \|\partial_x(w_N^{\theta} u)\|_2 +\|\partial_x w_N^{\theta} u\|_2\\
 &\leq 
   \|\partial_x(w_N^{\theta} u)|_2 + c \| u\|_2 \leq c||J(w_N^{\theta} u)\|_2 +c\|u\|_2\\
   &\leq 
 c  \| J^2u\|_2^{1/2}\| \langle x\rangle^{2 \theta} u\|_2^{1/2} + c \|u\|_2,
\end{aligned}
  \end{equation}
  which has been  bounded in the previous cases. 
  So it remains to bound the term
  \begin{equation}
  \label{remain1}
  F_2=\| w_N^{\theta}\,x \,\partial_x u\|_2,
  \end{equation}
  which will be considered later.
  
   Inserting the term $E_1$ in \eqref{iden1} into \eqref{eq5} one 
   obtains the term
   \begin{equation}
   \label{eq10}
G_1=   \int{w_N^{1+\theta}\,\mathcal H \partial_x^2(xu) \,w_N^{1+\theta}\,x\,u\,dx}.
\end{equation}
As before we write
\begin{equation}
\begin{aligned}
   \label{eq11}
&w_N^{1+\theta}\,\mathcal H \partial_x^2(xu) = -[\mathcal H; w_N^{1+\theta}] \partial_x^2(xu)
+\mathcal H(w_N^{1+\theta} \partial_x^2(xu)) \\
& = K_1+ \mathcal H(\partial_x^2(w_N^{1+\theta}   x u))-2\mathcal H(\partial_x w_N^{1+\theta} \partial_x(xu))- \mathcal H(\partial_x^2w_N^{1+\theta}(xu))\\
&=K_1+K_2+K_3+K_4.
\end{aligned}
\end{equation}

Thus, by Theorem \ref{theorem8} and the results in the previous cases the contribution of $K_1, \,K_4$ 
in \eqref{eq10} is bounded. Also inserting the term $K_2$ in \eqref{eq10} one has by integration by
parts that its contribution is null. So in \eqref{eq11} it only remains to consider the contribution from $K_3$ in 
\eqref{eq10}. 
  But using that
  \begin{equation}
  \label{eq15}
  \begin{aligned}
  \|K_3\|_2&=\|\mathcal H(\partial_x w_N^{1+\theta} \partial_x(xu))\|_2=  \|\partial_x w_N^{1+\theta} \partial_x(xu)\|_2\\
 & \leq \|\partial_x w_N^{1+\theta} \,u\|_2 + \|\partial_x w_N^{1+\theta}\,x\, \partial_xu\|_2
\\
&  \leq c( \| w_N^{\theta} \,u\|_2 + \|w_N^{\theta}\,x\, \partial_xu\|_2)=R_1+R_2,
 \end{aligned}
  \end{equation}
  since $R_1$ was previously bounded, it remains to estimate $R_2$ which is equal to the term $F_2$ in 
  \eqref{remain1}. To estimate this term we use the BO equation in \eqref{BO} to obtain the new equation
  \begin{equation}
  \label{neweq}
 \partial_t (x\,\partial_xu) + \mathcal  H\partial_x^2(x\,\partial_x u)-2\mathcal H\partial_x^2 u +x\,\partial_x(u\partial_x u) = 0. 
 \end{equation} 
 
The differential equation \eqref{neweq} multiplied by $w_N^{2\theta}\,x\,\partial_xu$ leads to the identity
 \begin{equation}
\label{eq16}
\begin{aligned}
&\frac{1}{2}\frac{d}{dt}\int{(w_N^{\theta}\,x\partial_x u)^2\,dx}+
\int{w_N^{\theta}\mathcal H \partial_x^2(x\partial_x u) w_N^{\theta} (x\partial_x u)\,dx}\\
&-2\int w_N^{\theta} \mathcal H\partial_x^2u\, w_N^{\theta}x\partial_x u\,dx
+ \int{w_N^{\theta}x\,\partial_x(u\partial_x u)\,w_N^{\theta}x\partial_x u\,dx}=0.
\end{aligned}
\end{equation}
Sobolev inequality and integration by parts lead to 
 \begin{equation}
\label{eq16b}
\begin{aligned}
&|\int{w_N^{\theta}x\,\partial_x(u\partial_x u)\,w_N^{\theta}x\partial_x u\,dx}|\\
&\leq c \|u\|_{2,2}\|w_N^{\theta}x\partial_x u\|_2 ( \|w_N^{\theta}x\partial_x u\|_2 + \|w_N^{\theta} u\|_2),
\end{aligned}
\end{equation}
and since
 \begin{equation}
 \label{eq17}
 \begin{aligned}
&w_N^{\theta} \mathcal H\partial_x^2(x\partial_x u)= -[\mathcal H; w_N^{\theta}] \partial_x^2(x\partial_x u)
 +\mathcal H(w_N^{\theta} \partial_x^2(x\partial_x u))\\
& =V_1 + \mathcal H\partial_x^2(w_N^{\theta} \,x \partial_x u) - 2\mathcal H(\partial_x w_N^{\theta}\,\partial_x(x\partial_xu))
-\mathcal H(\partial_x^2(w_N^{\theta})\,x\,\partial_xu)\\
&=V_1+V_2+V_3+V_4,
\end{aligned}
\end{equation}
 Theorem \ref{theorem8}, the previous results, and interpolation allow to bound the
 $L^2$-norm of the terms $V_1$ and $V_4$. As before by integration by parts the 
 contribution of the term $V_2$ in \eqref{eq16}
 is null. So it just remains to consider the term $V_3$ in \eqref{eq17}. In
 fact,
 $$
 V_3= - 2\mathcal H(\partial_x w_N^{\theta}\,\partial_xu)- 2\mathcal H(\partial_x w_N^{\theta}\,(x\partial^2_xu))
 =V_{3,1}+V_{3,2},
 $$
 so one just needs to handle the term $V_{3,2}$. Using that 
 $$
 |\partial_xw_N^{\theta} \,x|\leq c\, w_N^{\theta},\;\;\;\;c\;\;\text{independent of }\;N,
 $$
 it suffices to consider 
 \begin{equation}
 \label{final1}
 \| \,w_N^{\theta}\,\partial_x^2u\|_2\leq  c \|J^2(w_N^{\theta} u)\|_2 + c \|u\|_{1,2} +c  \| \,w_N^{\theta}\,u\|_2,
 \end{equation}
with $c$ independent on $N$.  So we just need to consider the first term on the r.h.s. of the inequality \eqref{final1}.
 Using interpolation it follows that
 \begin{equation}
 \label{final2}
 \|J^2(w_N^{\theta} u)\|_2 \leq c \|J^{2+\theta}u\|^{2/(2+\theta)}_2\,\| w_N^{2+\theta} u\|^{\theta/(2+\theta)}_s.
 \end{equation}
 We notice that the first term on the r.h.s. of \eqref{final2} is bounded and the second one is bounded by the one 
 we were estimating in \eqref{eq5}. Therefore, \eqref{eq5} and  \eqref{eq16} yield  closed differential inequalities
 for $\,\|xw_N^{1+\theta} \,u\|_2$ and $\,\|w_N^{\theta}\,x\,\partial_x u\|_2$, and consequently the desired result.
 
 \vskip.1in
 
  \underline{Case 5: $s=r\in [5/2,7/2)$}.  Part (iii) in Theorem \ref{theorem3}.

 First, by differentiating the BO equation in \eqref{BO} 
  one gets
  $$
  \partial_t (\partial_x u) + \mathcal  H\partial_x^2(\partial_x u) +
  u\partial_x(\partial_x u) +\partial_x u\,\partial_x u = 0,
 $$
 so by reapplying the argument in the previous cases it follows that 
 \begin{equation}
 \label{pastcase}
\sup_{t\in[0,T]} \| \langle x\rangle^{s-1}\,\partial_xu(t)\|_2\leq M,
\end{equation}
with $M$ depending on $\|u_0\|_{s,2},\,\| \langle x\rangle^{s}\,u_0\|_2,$ and $T$.

Next, we multiply the BO equation in \eqref{BO} by $ x^2 w_N^{\tilde \theta}$ with $\tilde \theta \in [1/2,3/2)$
to get
\begin{equation}
\label {111}
\partial_t x^2 w_N^{\tilde \theta}u
 + x^2 w_N^{\tilde \theta}\mathcal  H\partial_x^2u +x^2 w_N^{\tilde \theta} u\partial_x u = 0,
\end{equation}
so a familiar argument  leads to
\begin{equation}
\label {1.1}
\begin{aligned}
&\frac{1}{2}\frac{d\;}{dt}\int (x^2 w_N^{\tilde \theta}u)^2\,dx + \int x^2 w_N^{\tilde \theta}\mathcal  H\partial_x^2u 
\,x^2 w_N^{\tilde \theta}u\,dx \\
&\;\;+\int x^2 w_N^{\tilde \theta} u\partial_x ux^2 w_N^{\tilde \theta}u\,dx = 0.
\end{aligned}
\end{equation}

Using  the identity
 $$
 x^2\,\mathcal H\,\partial_x^2 u =\mathcal H\,\partial_x^2(x^2 \,u) + 4 \mathcal H\,\partial_x(x u) +\mathcal H u,
 $$
  the linear dispersive part of \eqref{111} (the second term on the l.h.s. of \eqref{111}) can be written
 \begin{equation}
 \label{1.3}
 \begin{aligned}
 w_N^{\tilde \theta}\,x^2\,\mathcal H\,\partial_x^2 u &=w_N^{\tilde \theta}\,\mathcal H\,\partial_x^2(x^2 \,u) 
 + 4 w_N^{\tilde \theta}\,\mathcal H\,\partial_x(x u) + w_N^{\tilde \theta}\,\mathcal H u\\
 &=Q_1+Q_2+Q_3.
 \end{aligned}
 \end{equation}
 
Since
   $$
   \int_{-\infty}^{\infty}u_0(x)\,dx=\int_{-\infty}^{\infty}u(x,t)\,dx=0,\;\;\;\;\;\text{then}\;\;\;\;\;H(xu)=xHu,
   $$
  for $\,\tilde \theta\in [1/2,1]$ one has 
   $$
   \|Q_3\|_2=\|w_N^{\tilde \theta} \mathcal H u\|_2\leq \|(1+|x|)\mathcal Hu\|_2
   \leq \|u\|+\|x\,u\|_2,
   $$
   and for $\tilde \theta\in(1,3/2)$ using Proposition \ref{proposition1} 
   $$
   \aligned
   \|Q_3\|_2&=\|w_N^{\tilde \theta} \mathcal H u\|_2\leq \|(1+|x|) w_N^{\tilde \theta-1}\mathcal Hu\|_2
 \\
 &  \leq \|w_N^{\tilde \theta-1}\,u\|+\|\,w_N^{\tilde \theta-1}\,x\,u\|_2,
 \endaligned
   $$
so in both cases by the previous results $Q_3$ in \eqref{1.3} is bounded in $L^2$.

To control $Q_2$ we first consider the case $\tilde \theta\in [1/2,1]$ and use 
Calder\'on commutator theorem to get
$$
 \aligned
 \|Q_2\|_2&=4 \|w_N^{\tilde \theta}\,\mathcal H\,\partial_x(x u)\|_2
\\
& \leq c(\|\,[\mathcal H;  w_N^{\tilde \theta}]\partial_x(x u)\|_2 +
  \| \mathcal H(w_N^{\tilde \theta}\,\partial_x(x u))\|_2)\\
  &\leq c(\|x u\|_2 + \|w_N^{\tilde \theta} \,x\partial_x u\|_2 +\|w_N^{\tilde \theta} \, u\|_2). 
  \endaligned
 $$
Thus, in the case $\,\tilde \theta\in [1/2,1]$,  \eqref{pastcase} provides the appropriate bound on the $L^2$-norm of $Q_2$

For the case  $\,\tilde \theta=1+\theta,\;\theta\in (0,1/2)$ we combine  Proposition \ref{proposition1} and the hypothesis on the mean value of $u_0$
 to deduce that
 $$
 \aligned
 \|Q_2\|_2&=4 \|w_N^{\tilde \theta}\,\mathcal H\,\partial_x(x u)\|_2
 \leq
 c ( \|w_N^{\theta}\,x\,\mathcal H\,\partial_x(x u)\|_2 +
 \|w_N^{\theta}\,\mathcal H\,\partial_x(x u)\|_2)\\
 & 
 \leq c ( \|w_N^{\theta}\,\mathcal H(x\,\partial_x(x u))\|_2 +
 \|w_N^{\theta}\,\partial_x(x u)\|_2)\\
 & \leq c ( \|w_N^{\theta}\,x\,\partial_x(x u)\|_2 +
 \|w_N^{\theta}\,\partial_x(x u)\|_2).
 \endaligned
 $$
  
  Hence, \eqref{pastcase} yields the appropriate bound on the $L^2$-norm of $Q_2$. 
  
  Finally, we turn to the contribution of the term $Q_1$ when inserted in \eqref{1.3}. Thus, we write
  $$
  \aligned
  &w_N^{\tilde \theta}\,\mathcal H\,\partial_x^2(x^2 \,u) = -[\mathcal H; w_N^{\tilde \theta}]\partial^2_x(x^2u)
  +\mathcal H(w_N^{\tilde \theta}\,\partial_x^2(x^2 \,u))\\
  &=V_1 +  \mathcal H(\partial_x^2(w_N^{\tilde \theta}\,x^2 \,u))-2\mathcal H(\partial_x w_N^{\tilde \theta}\,\partial_x(x^2 \,u))-\mathcal H(\partial_x^2w_N^{\tilde \theta}\,(x^2 \,u))
  \\
  &=V_1+V_2+V_3+V_4.
  \endaligned
  $$
  
  From the previous cases it follows that the $L^2$-norm of the terms $V_1,\,V_4$ are bounded.
  By integration by parts, the contribution of the term $V_2$  is null. So it just remains to consider $V_3=
-2\mathcal H(\partial_x w_N^{\tilde \theta}\,\partial_x(x^2 \,u))$ in $L^2$, but
  $$
 \partial_x w_N^{\tilde \theta}\,\partial_x(x^2 \,u)=
  \partial_x w_N^{\tilde \theta}\,(x^2 \,\partial_x u + 2 x\,u)=V_{2,1}+V_{2,2}.
  $$
 Since $\,\tilde \theta\in(1/2,3/2]$
 $$
 \|V_{2,2}\|_2 \leq c\|\langle x\rangle^2u\|_2
 $$
which has been found  to be bounded in the previous cases. Now
since
$$
|\partial_x w_N^{\tilde \theta}\,x^2|\leq \langle x\rangle^{1+\tilde \theta},
$$
it follows that
$$
 \|V_{2,1}\|_2\leq \|\langle x\rangle^{1+\tilde \theta}\,\partial_xu\|_2,
 $$
so \eqref{pastcase} gives the bound. Gathering the above information one completes the proof 
of  Theorem \ref{theorem3}.

 \end{section}
 
\begin{section}{Proof of Theorem \ref{theorem4}}\label{S: 4}
Without loss of generality we assume that $t_1=0<t_2$.

Since $u(t_1)\in {Z}_{\frac52 ,\frac52}$, we have that $u\in C([0,T]:H^{2+1/2}\cap L^2(|x|^{5^-}dx))$.

Let us denote by $U(t)u_0=(e^{-it|\xi|\xi}\widehat{u_0})^{\vee}$ the solution of the IVP for the linear equation associated to the BO equation  with datum $u_0$. Therefore, the solution to the IVP \eqref{BO} can be represented by Duhamel's formula 
\begin{equation}
  u(t)=U(t)u_0-\int_0^t U(t-t')u(t')\partial_x u(t') dt'.
\label{duh}
\end{equation}

From Plancherel's equality we have that for any $t$, $|x|^{2+1/2}U(t)u_0\in L^2(\R)$ if and only if $D_{\xi}^{1/2}\partial_{\xi}^2(e^{-it|\xi|\xi}\widehat{u}_0)\in L^2(\R)$ and  since
 \begin{equation}
\partial_{\xi}^2(e^{-it|\xi|\xi}\widehat{u}_0)=-e^{-it|\xi|\xi}(4t^2\xi^2\widehat{u}_0+2it\text{sgn}(\xi)\widehat{u}_0+4it|\xi|\partial_{\xi}\widehat{u}_0-\partial_{\xi}^2\widehat{u}_0),
\label{flinear}
\end{equation}
we show that with the hyphotesis on the initial data, all terms in Duhamel's formula for our solution $u$ except the one involving $\text{sgn}(\xi)$, arising from the linear part in \eqref{flinear},  have the appropriate decay at a later time. The argument in our proof requires  localizing near the origin in Fourier frequencies by a function $\chi\in C_0^{\infty}, {\it supp}\,\chi  \subseteq (-\epsilon, \epsilon)$ and $\chi=1$ on $(-\epsilon/2, \epsilon/2)$

Let us start with the computation for the linear part in \eqref{duh} by introducing a commutator as follows
\begin{equation}
\begin{split}
\chi D_{\xi}^{1/2}\partial_{\xi}^2(e^{-it|\xi|\xi}\widehat{u}_0)&=[\chi;D_{\xi}^{1/2}]\partial_{\xi}^2(e^{-it|\xi|\xi}\widehat{u}_0)+D_{\xi}^{1/2}(\chi\partial_{\xi}^2(e^{-it|\xi|\xi}\widehat{u}_0))\\
&=A+B.
\end{split}
\end{equation}

From Proposition \ref{proposition3} and identity \eqref{flinear} we have that
\begin{equation}
\begin{split}
\|A\|_2&= \|[\chi;D_{\xi}^{1/2}]\partial_{\xi}^2(e^{-it|\xi|\xi}\widehat{u}_0)\|_2\\
&\le c\,\|\partial_{\xi}^2(e^{-it|\xi|\xi}\widehat{u}_0)\|_2\\
&\le c\, (t^2\|\xi^2\widehat{u}_0\|_2+t\|\text{sgn}(\xi)\widehat {u}_0\|_2+t\||\xi|\partial_{\xi}\widehat{u}_0\|_2+\|\partial_{\xi}^2\widehat{u}_0\|_2)\\
&\le c\,(t^2\|\partial_2^2{u}_0\|_2+t\|u_0\|_2+t\|\partial_x(xu_0)\|_2+\|x^2u_0\|_2),
\end{split}
\label{2t1}
\end{equation}
which are all finite since $u_0\in Z_{2,2}$.

On the other hand, 
\begin{equation}
\begin{split}
B&=D_{\xi}^{1/2}(\chi\partial_{\xi}^2(e^{-it|\xi|\xi}\widehat{u}_0))\\
&= 4D_{\xi}^{1/2}(\chi e^{-it|\xi|\xi}t^2\xi^2\widehat{u}_0)+2iD_{\xi}^{1/2}(\chi e^{-it|\xi|\xi} t\,\text{sgn} (\xi)\widehat{u}_0)\\
&\,\,\,\,\,+4iD_{\xi}^{1/2}(\chi e^{-it|\xi|\xi}t|\xi|\widehat{u}_0)-D_{\xi}^{1/2}(\chi e^{-it|\xi|\xi}\partial_{\xi}^2\widehat{u}_0)\\
&=B_1+B_2+B_3+B_4.
\end{split}
\label{2t2}
\end{equation}

Next, we shall estimate $B_4$ in $L^2(\R)$. From Theorem \ref{theorem9}, Proposition \ref{propositionB}, and the fractional product rule type inequality \eqref{pointwise-es} we get that

\begin{equation}
\begin{split}
\|B_4\|_2&\le c(\|\chi e^{-it|\xi|\xi}\partial_{\xi}^2\widehat{u}_0\|_2+\|\mathcal D_{\xi}^{1/2}(\chi e^{-it|\xi|\xi}\partial_{\xi}^2\widehat{u}_0)\|_2)\\
&\le c\,(\|\partial_{\xi}^2\widehat{u}_0\|_2+\|\mathcal D_{\xi}^{1/2}( e^{-it|\xi|\xi})\chi\partial_{\xi}^2\widehat{u}_0\|_2+\| e^{-it|\xi|\xi}\mathcal D_{\xi}^{1/2}(\chi\partial_{\xi}^2\widehat{u}_0)\|_2\\
&\le c\,(\|x^2u_0\|_2 +\|(t^{1/4}+t^{1/2}|\xi|^{1/2})\chi\partial_{\xi}^2\widehat{u}_0\|_2+\|\mathcal D_{\xi}^{1/2}(\chi\partial_{\xi}^2\widehat{u}_0)\|_2 )\\
&\le c(T)\,(\|x^2u_0\|_2 +\|\mathcal D_{\xi}^{1/2}(\chi)\|_2\,\|\partial_{\xi}^2\widehat{u}_0\|_{\infty}+\|\chi\|_{\infty}\|\mathcal D_{\xi}^{1/2}(\partial_{\xi}^2\widehat{u}_0)\|_2)\\
&\le c(T)\, \|\langle x\rangle^{2+1/2}u_0\|_2.
\end{split}
\label{b4}
\end{equation}

\vskip.1in
Estimates for $B_1$ and $B_3$ in $L^2(\R)$ are easily obtained in a similar manner involving lower decay and regularity of the initial data. On the other hand for the analysis of $B_2$ we introduce $\tilde{\chi}\in C_0^\infty(\R)$ such that $\tilde{\chi}\equiv 1$ on $ {\it supp}(\chi)$ . Then
we can express this term as
\begin{equation}
\begin{split}
&D_{\xi}^{1/2}(\chi e^{-it|\xi|\xi} t\,\text{sgn} (\xi)\widehat{u}_0)=tD_{\xi}^{1/2}( e^{-it|\xi|\xi} \tilde{\chi}\chi\,\text{sgn} (\xi)\widehat{u}_0)\\
&=t\,([e^{-it|\xi|\xi} \tilde{\chi},D_{\xi}^{1/2}]\chi\,\text{sgn} (\xi)\widehat{u}_0+ e^{-it|\xi|\xi} \tilde{\chi}D_{\xi}^{1/2}(\chi\,\text{sgn} (\xi)\widehat{u}_0))\\
&=t(S_1+S_2).
\end{split}
\label{b21}
\end{equation}

Proposition \ref{proposition3} can be applied to bound $S_1$ in $L^2(\R)$ as 
\begin{equation}
\begin{split}
\|S_1\|_2&\le \|[e^{-it|\xi|\xi} \tilde{\chi},D_{\xi}^{1/2}]\chi\,\text{sgn} (\xi)\widehat{u}_0\|_2\\
&\le c \|\chi\,\text{sgn} (\xi)\widehat{u}_0\|_2\\
&\le c\|u_0\|_2.
\end{split}
\label{b22}
\end{equation}

Therefore, once we show that the integral part in Duhamel`s formula \eqref{duh} lies in $L^{2}(|x|^5 dx)$, we will be able to conclude that 
$$
S_2,\,\;\tilde{\chi}D_{\xi}^{1/2}\chi\,\text{sgn} (\xi)\widehat{u}_0,\,\; D_{\xi}^{1/2}(\tilde{\chi}\chi\,\text{sgn} (\xi)\widehat{u}_0)\in L^2(\R),
$$
 then from Proposition \ref{proposition2}   it will follow that $\widehat{u}_0(0)=0$, and from the conservation law $I_1$ in  \eqref{laws}, this would necessarily imply that  $\widehat{u}_0(0)=\int{u(x,t)dx}=0$.

As  we just mentioned above,  in order to complete the proof, we consider the integral part in Duhamel's formula. We localize again with the help of $\chi\in C_0^{\infty}(\R)$ so that the integral in equation \eqref{duh} after weights and a commutator reads now in Fourier space as
\begin{equation}
\label{ole}
\begin{aligned}
&\int_0^t ([\chi;D_{\xi}^{1/2}] ( e^{-i(t-t')|\xi|\xi}(4(t-t')^2\xi^2\widehat{z}\\
&+2i(t-t')\text{sgn}(\xi)\widehat{z}+4i(t-t')|\xi|\partial_{\xi}\widehat{z}-\partial_{\xi}^2\widehat{z}))\\
&+D_{\xi}^{1/2} (\chi (e^{-i(t-t')|\xi|\xi}(4(t-t')^2\xi^2\widehat{z}\\
&+2i(t-t')\text{sgn}(\xi)\widehat{z}+4i(t-t')|\xi|\partial_{\xi}\widehat{z}-\partial_{\xi}^2\widehat{z}))))\,dt'\\
&=\mathcal{A}_1+\mathcal{A}_2+\mathcal{A}_3+\mathcal{A}_4+\mathcal{B}_1+\mathcal{B}_2+\mathcal{B}_3+\mathcal{B}_4
\end{aligned}
\end{equation}
where $\widehat{z}=\frac{1}{2}\widehat{\partial_x u^2}=i\frac{\xi}{2} \widehat{u}\ast\widehat{u}$.

We limit our attention to the terms in \eqref{ole} involving the highest order derivatives of $u$, i.e. $\mathcal{A}_1$ and $\mathcal{B}_1$, and remark that the others can be treated in a similar way by using that the function $\widehat z$ vanishes at $\xi=0$.
  
Combining Proposition \ref{proposition3}, Holder's inequality and Theorem \ref{theorem10} one has that
\begin{equation}
\begin{split}
\|\mathcal{A}_1\|_{L_T^{\infty}L_x^2}&\le c\,\|(t-t')^2\xi^2\xi \widehat{u}\ast\widehat{u}\|_{L_T^1L_x^2}\\
&\le c(T)\,\|\partial^3_x(uu)\|_{L_T^2L_x^2}\\
&\le c(T)\,(\|u\partial^3_xu\|_{L_T^2L_x^2}+\|\partial_x u\partial^2_xu\|_{L_T^2L_x^2})\\
&\le c(T)\, (\|u\|_{l_k^2L_T^\infty L_x^\infty(Q_k^T)}\|\partial^3_xu\|_{l_k^\infty L_T^2 L_x^2(Q_k^T)}
\\
&\;\;\;\;+\|\partial_x u\|_{L_T^\infty L_x^\infty}\|\partial^2_xu\|_{L_T^\infty L_x^2})\\
&\le c(T)(\|u\|_{l_k^2L_T^\infty L_x^\infty(Q_k^T)}\|\partial^3_xu\|_{l_k^\infty L_T^2 L_x^2(Q_k^T)}
+\|u\|^2_{L_T^\infty H^2}),
\end{split}
\end{equation}
where $\,Q_k^T=[k,k+1]\times [0,T]$. 

For  $\mathcal{B}_1$ we obtain from  Theorem \ref{theorem9} 
\begin{equation}
\label{4.11}
\begin{aligned}
&\|\mathcal{B}_1\|_{L^{\infty}_TL^2_x}\le c  \int_0^T\|D_{\xi}^{1/2}(e^{-i(t-t')|\xi|\xi}\chi\,\xi^2\xi \widehat{u}\ast\widehat{u})\|_2\,dt\\
&\le c (\|e^{-i(t-t')|\xi|\xi}\chi\,\xi^2\xi \widehat{u}\ast\widehat{u}\|_{L^1_TL^2_x}+\|\mathcal D_{\xi}^{1/2}(e^{-i(t-t')|\xi|\xi}\chi\,\xi^2\xi \widehat{u}\ast\widehat{u})\|_{L^1_TL^2_x})\\
&=\mathcal Y_1 +\mathcal Y_2.
\end{aligned}
\end{equation}
These terms can be handled as follows
\begin{equation}
\label{4.12}
\begin{aligned}
\mathcal Y_1\leq c\|\widehat{u}\ast\widehat{u}\|_{L^1_TL^2_x}\leq c \| \|u\|_{\infty} \|u\|_2 \|_{L^1_T}
\leq c T \,\sup_{[0,T]}\|u(t)\|^2_{1,2} ,
\end{aligned}
\end{equation}
and using Proposition \ref{propositionB}, \eqref{pointwise2}, \eqref{pointwise-es}, and \eqref{4.12}
\begin{equation}
\label{4.13}
\begin{aligned}
 \mathcal Y_2 &= \|\mathcal D_{\xi}^{1/2}(e^{-i(t-t')|\xi|\xi}\chi\,\xi^2\xi \widehat{u}\ast\widehat{u})\|_{L^1_TL^2_x}\\
&\leq c \| \mathcal D_{\xi}^{1/2}(e^{-i(t-t')|\xi|\xi})\chi\,\xi^2\xi \widehat{u}\ast\widehat{u}\|_{L^1_TL^2_x}
+ c \|\mathcal D_{\xi}^{1/2}(\chi\,\xi^2\xi \widehat{u}\ast\widehat{u})\|_{L^1_TL^2_x})\\
&\leq c \| (t^{1/2} + t^{1/2}|\xi|^{1/2})\chi\,\xi^2\xi \widehat{u}\ast\widehat{u}\|_{L^1_TL^2_x}
+ c \| \|\mathcal D_{\xi}^{1/2}(\chi\,\xi^3)\|_{\infty} \|\widehat{u}\ast\widehat{u}\|_2\|_{L^1_T}\\
&\,\,\,\,\,+c \|\|\chi\,\xi^3\|_{\infty}\|\mathcal D_{\xi}^{1/2}(\widehat{u}\ast\widehat{u})\|_2\|_{L^1_T}\\
&\leq c(T)\|\widehat{u}\ast\widehat{u}\|_{L^1_TL^2_x} +
c \|\mathcal D_{\xi}^{1/2}(\widehat{u}\ast\widehat{u}\|_{L^1_TL^2_x}\\
& \leq c(T) \,\sup_{[0,T]}\|u(t)\|^2_{1,2} +c(T)\| |x|^{1/2} u\|_{L^{\infty}_TL^2_x}\,\sup_{[0,T]}\|u(t)\|_{1,2}.
\end{aligned}
\end{equation}

Hence the terms in \eqref{ole} are all bounded, so by applying the argument after inequality \eqref{b22} we complete the proof.

\end{section}


\begin{section}{Proof of Theorem \ref{theorem5}}\label{S: 5}

From the previous results and the hypothesis we have that for any $\epsilon>0$
$$
u\in C([0,T] :\dot Z_{7/2,7/2-\epsilon})\;\;\;\text{and}\;\;\;u(\cdot,t_j)\in H^{7/2}(\R),\;\;j=1,2,3.
$$
Hence,
$$
\widehat u\in C([0,T] : H^{7/2-\epsilon}(\mathbb R)\cap L^2(|\xi|^7d\xi))\;\;\;\text{and}\;\;\;\widehat u(\cdot,t_j)\in L^2(|\xi|^7d\xi),\;\;j=1,2,3.
$$
for any $\epsilon>0$. Thus, in particular it follows that
\begin{equation}
\label{123}
\widehat u \ast \widehat u\in  C([0,T] : H^{6}(\mathbb R)\cap L^2(|\xi|^7d\xi)).
\end{equation}

Let us assume that $t_1=0<t_2<t_3$. An explicit computation shows that
\begin{equation}
 \begin{split}
F(t,\xi,\widehat{u}_0)&=\partial_{\xi}^3(e^{-it|\xi|\xi}\widehat{u}_0)\\
&=e^{-it|\xi|\xi}(8it^3\xi^3\widehat{u}_0-12t^{2}\xi\widehat{u}_0-12t^{2}\xi^2\partial_{\xi} \widehat{u}_0\\
&\,\,\,\,\,-6it\,\text{sgn}(\xi)\partial_{\xi} \widehat{u}_0-6it|\xi|\partial_{\xi} ^2\widehat{u}_0-2it\delta\widehat{u}_0+\partial_{\xi} ^3\widehat{u}_0),
\end{split}
\label{flinear3}
\end{equation}
where we observe that  since the initial data $u_0$ has zero mean value  the term involving the Dirac function in  \eqref{flinear3} vanishes.  Hence in order to prove our theorem, via Plancherel's Theorem and Duhamel`s formula \eqref{duh}, it is enough to show that the assumption that 
 \begin{equation}
D_{\xi}^{1/2}F(t,\xi,\widehat{u}_0)-\int_0 ^tD_{\xi}^{1/2}F(t-t',\xi,\widehat{z}(t'))\,dt',
\label{ec}
\end{equation}
lies in $L^2(\R)$ for times $t_1=0<t_2<t_3$, where $\widehat{z}= \frac{1}{2}\widehat {\partial_x u^2}=i \frac{\xi}{2} \widehat{u}\ast\widehat{u}$, leads to a contradiction.
Let us show that the first term in equation \eqref{ec} which arises from the linear part in Duhamel's formula persists in $L^2$

We proceed as in the proof of Theorem \ref{theorem4} and  localize one more time  by introducing $\chi\in C_0^{\infty}, {\it supp}\,\chi  \subseteq (-\epsilon, \epsilon)$ and $\chi=1$ on $(-\epsilon/2, \epsilon/2)$ so that

\begin{equation}
\begin{split}
\chi\,D_{\xi}^{1/2}\partial_{\xi} ^3(e^{-it|\xi|\xi}\widehat{u}_0)&=[\chi;D_{\xi}^{1/2}]\partial_{\xi} ^3(e^{-it|\xi|\xi}\widehat{u}_0)+D_{\xi}^{1/2}(\chi\partial_{\xi} ^3(e^{-it|\xi|\xi}\widehat{u}_0))\\
&=\acont+\bcont.
\end{split}
\end{equation}
 
As for the first term, $\acont$,  from Proposition \ref{proposition3}, this is bounded in $L^2(\R)$ by $\|\partial_{\xi} ^3(e^{-it|\xi|\xi}\widehat{u}_0)\|_2$, which is finite as can easily be observed from its explicit representation in \eqref{flinear3}, the assumption on the initial data $u_0$, and the quite similar computation already performed in \eqref{2t1}, therefore we omit the details.

On the other hand, for $\bcont$, we notice that 
\begin{equation}
\begin{split}
\bcont &=D_{\xi}^{1/2}(\chi\partial_{\xi} ^3(e^{-it|\xi|\xi}\widehat{u}_0))\\
&= 8iD_{\xi}^{1/2}(\chi e^{-it|\xi|\xi}t^3|\xi|^3\widehat{u}_0)-12D_{\xi}^{1/2}(\chi e^{-it|\xi|\xi} t^2\,\xi\widehat{u}_0)\\
&\,\,\,\,\,-12D_{\xi}^{1/2}(\chi e^{-it|\xi|\xi} t^2\,\xi^2\partial_{\xi} \widehat{u}_0)-6iD_{\xi}^{1/2}(\chi e^{-it|\xi|\xi} t\,\text{sgn} (\xi)\partial_{\xi}  \widehat{u}_0)\\
&\,\,\,\,\,-6iD_{\xi}^{1/2}(\chi e^{-it|\xi|\xi}t|\xi|\partial_{\xi} ^2\widehat{u}_0) +D_{\xi}^{1/2}(\chi e^{-it|\xi|\xi}\partial_{\xi} ^3\widehat{u}_0)\\
&=\bcont_1+\bcont_2+\bcont_3+\bcont_4+\bcont_5+\bcont_7.
\end{split}
\label{4t2}
\end{equation}

Notice that  from the remark made after the identity \eqref{flinear3} $\bcont_6$ does not appear,  and that $\bcont_1$ and $\bcont_7$ are the terms involving the  highest regularity and decay of the initial data. Therefore we show in detail their $L^2$ estimates  along with the argument to exploit a  nice cancellation property of $\bcont_4,$ and a term arising in the integral part in Duhamel's formula \eqref{duh} .

For $\bcont_1$ we obtain from Theorem \ref{theorem9}, fractional product rule type estimate \eqref{pointwise2} ,
\eqref{pointwise-es}, and Holder's inequality  that
\begin{equation}
\begin{split}
\|\bcont _1\|_2&\le c(\|\chi e^{-it|\xi|\xi}t^3\xi^3\widehat{u}_0\|_2+\|\mathcal D_{\xi}^{1/2}(\chi e^{-it|\xi|\xi}t^3|\xi|^3\widehat{u}_0)\|_2)\\
&\le c\,t^3(\|u_0\|_2+\|\mathcal D_{\xi}^{1/2}( e^{-it|\xi|\xi})\chi|\xi|^3\widehat{u}_0\|_2+\| e^{-it|\xi|\xi}\mathcal D_{\xi}^{1/2}(\chi |\xi|^3\widehat{u}_0)\|_2)\\
&\le c\,t^3(\|u_0\|_2 +\|(t^{1/4}+t^{1/2}|\xi|^{1/2})\chi|\xi|^3\widehat{u}_0\|_2+\|\mathcal D_{\xi}^{1/2}(\chi|\xi|^3\widehat{u}_0)\|_2 )\\
&\le c(T)\,(\|u_0\|_2 +\|\mathcal D_{\xi}^{1/2}(\chi\xi^3)\|_{\infty}\,\|\widehat{u}_0\|_2+\|\chi\xi^3\|_{\infty}\|\mathcal D_{\xi}^{1/2} \widehat{u}_0\|_2)\\
&\le c(T)\,(\|u_0\|_2 +\| |x|^{1/2} u_0\|_2),
\end{split}
\label{bc1}
\end{equation}
and similarly
\begin{equation}
\begin{split}
\|\bcont_7\|_2&\le c(\|\chi e^{-it|\xi|\xi}\partial_{\xi}^3\widehat{u}_0\|_2+\|\mathcal D_{\xi}^{1/2}(\chi e^{-it|\xi|\xi}\partial_{\xi}^3\widehat{u}_0)\|_2)\\
&\le c\,(\|\partial_{\xi}^3\widehat{u}_0\|_2+\|\mathcal D_{\xi}^{1/2}( e^{-it|\xi|\xi})\chi\partial_{\xi}^3\widehat{u}_0\|_2+\| e^{-it|\xi|\xi}\mathcal D_{\xi}^{1/2}(\chi\partial_{\xi}^3\widehat{u}_0)\|_2\\
&\le c\,(\|x^3u_0\|_2 +\|(t^{1/4}+t^{1/2}|\xi|^{1/2})\chi\partial_{\xi}^3\widehat{u}_0\|_2+\|\mathcal D_{\xi}^{1/2}(\chi\partial_{\xi}^3\widehat{u}_0)\|_2 )\\
&\le c(t)\,(\|x^3u_0\|_2 +\|\mathcal D_{\xi}^{1/2}(\chi)\|_{\infty}\,\|\partial_{\xi}^3\widehat{u}_0\|_{2}+\|\chi\|_{L^\infty}\|\mathcal D_{\xi}^{1/2}(\partial_{\xi}^3\widehat{u}_0)\|_2)\\
&\le c(T)\,(\|x^3u_0\|_2 +\|D_{\xi}^{1/2}(\partial_{\xi}^3\widehat{u}_0)\|_2)\\
&\le c(T)\, \|\langle x\rangle^{3+1/2}u_0\|_2.
\end{split}
\label{bc7}
\end{equation}

Now, let us go over the integral part that can be written in Fourier space and with the help of a commutator  as
\begin{equation}
 \begin{split}
&\int_0^t ([\chi;D_{\xi}^{1/2}](e^{-i(t-t')|\xi|\xi}(8i(t-t')^3\xi^3\widehat{z}-12(t-t')^{2}\xi\widehat{z}\\
&-12(t-t')^{2}\xi^2\partial_{\xi} \widehat{z}-6i(t-t')|\xi|\partial_{\xi} ^2\widehat{z}-2i(t-t')\delta\widehat{z}
+\partial_{\xi} ^3\widehat{z}))\\
&+D_{\xi}^{1/2}(\chi(8i(t-t')^3\xi^3\widehat{z}-12(t-t')^{2}\xi\widehat{z}\\
&-12(t-t')^{2}\xi^2\partial_{\xi} \widehat{z}-6i(t-t')|\xi|\partial_{\xi} ^2\widehat{z}-2i(t-t')\delta\widehat{z}+\partial_{\xi} ^3\widehat{z})))dt\\
&=\widetilde{\mathcal{A}}_1+\widetilde{\mathcal{A}}_2+\widetilde{\mathcal{A}}_3+\widetilde{\mathcal{A}}_5
+\widetilde{\mathcal{A}}_6+\widetilde{\mathcal{A}}_7+\widetilde{\mathcal{B}}_1+\widetilde{\mathcal{B}}_2+\widetilde{\mathcal{B}}_3
+\widetilde{\mathcal{B}}_5+\widetilde{\mathcal{B}}_6+\widetilde{\mathcal{B}}_7+\widetilde{\mathcal{C}},
\label{ic1}
\end{split}
\end{equation}
where
\begin{equation}
\widetilde{\mathcal{C}}=-6i\int_0^t D_{\xi}^{1/2}(e^{-i(t-t')|\xi|\xi}\chi\,(t-t')\text{sgn}(\xi)\partial_{\xi} \widehat{z})\,dt',
\end{equation}
and  $\widehat{z}=\frac{1}{2}\widehat{\partial_x u^2}=i\frac{\xi}{2} \widehat{u}\ast\widehat{u}$.

Notice that $\widetilde{\mathcal{A}}_6,\widetilde{\mathcal{B}}_6 $ vanish since $u\partial_x u $ has zero mean value and for 
$\widetilde{\mathcal{A}}_1, \widetilde{\mathcal{A}}_2, \widetilde{\mathcal{A}}_3, \widetilde{\mathcal{A}}_5$, $\widetilde{\mathcal{A}}_7, \widetilde{\mathcal{B}}_1,\widetilde{\mathcal{B}}_2,\widetilde{\mathcal{B}}_3,\widetilde{\mathcal{B}}_5$ and $\widetilde{\mathcal{B}}_7$ the estimates in $L^2(\R)$ are essentially the same for their counterparts   in equation \eqref{ole}, in the proof of Theorem \ref{theorem4}, so we omit the details of their estimates.

Therefore from the assumption that $u_0,u(t_2)\in \dot{Z}_{\frac72 ,\frac{7}{2}}$, equation \eqref{ic1}, and the estimates above, we  conclude 
that 
\begin{equation}
\begin{split}
R&=-6iD_{\xi}^{1/2}(e^{-it|\xi|\xi}\chi\, t\,\text{sgn}(\xi)\partial_{\xi} \widehat{u}_0)-\widetilde{\mathcal{C}}\\
&=-6iD_{\xi}^{1/2}(e^{-it|\xi|\xi}\chi\, t\,\text{sgn}(\xi)\partial_{\xi} \widehat{u}_0)\\
&\,\,\,\,\,+6i\int_0^t D_{\xi}^{1/2}(e^{-i(t-t')\,|\xi|\xi}\chi\,(t-t')\text{sgn}(\xi) \partial_{\xi} (\frac{i \xi}{2} \widehat{u}\ast\widehat{u}))\,dt',
\end{split}
\label{last}
\end{equation}
is a function in $L^2(\R)$ at time $t=t_2$. But
\begin{equation}
\begin{split}
R&=6i\int_0^t (D_{\xi}^{1/2}(e^{-i(t-t')\,|\xi|\xi}\chi\,(t-t')\text{sgn}(\xi)\\
&\,\,\,\,\,\,(\partial_{\xi} (\frac{i\xi}{2} \widehat{u}\ast\widehat{u})-\partial_{\xi} (\frac{i\xi}{2} \widehat{u}\ast\widehat{u})(0)))\,dt'\\
&\,\,\,\,\,-6iD_{\xi}^{1/2}(e^{-it|\xi|\xi}\chi\, t\,\text{sgn}(\xi)(\partial_{\xi} \widehat{u}_0-\partial_{\xi}  \widehat{u}_0(0)))\\
&\,\,\,\,\,-6iD_{\xi}^{1/2}(e^{-it|\xi|\xi}\chi\, t\,\text{sgn}(\xi)\partial_{\xi}  \widehat{u}_0(0))\\
&\,\,\,\,\,+6i\int_0^t D_{\xi}^{1/2}(e^{-i(t-t')\,|\xi|\xi}\chi\,(t-t')\text{sgn}(\xi)(\partial_{\xi} (\frac{i\xi}{2} \widehat{u}\ast\widehat{u}(0)))\,dt'\\
&=R_1+R_2+R_3+R_4.
\end{split}
\label{last1}
\end{equation}

We shall show that  $R_1$ and $R_2$ are $L^2(\R)$ functions. This will imply that  $(R_3+R_4)(t_2)$ is also an $L^2(\R)$ function. 

For $R_1$ we observe that from \eqref{123}
$$
e^{-i(t-t')|\xi|\xi}\chi(\xi) \text{sgn}(\xi) (\partial_{\xi} (\frac{i\xi}{2} \widehat{u}\ast\widehat{u})(\xi,t')-\partial_{\xi} (\frac{i\xi}{2} \widehat{u}\ast\widehat{u})(0,t')))
$$
is a Lipschitz function with compact support in the $\xi$ variable. Therefore, using Theorem \ref{St1} one sees that
$ R_1(t)\in L^2(\R)$. A similar argument shows that $ R_2(t)\in L^6(\R)$. Therefore, we have that $(R_3+R_4)(t_2)\in L^2(\R)$.

On the other hand 
$$
\partial_{\xi} (\frac{i\xi}{2} \widehat{u}\ast\widehat{u})(0)=\widehat{-ixu\partial_x u\,}(0)=-i\int{xu\partial_x u\,dx}=\frac{i}{2}\|u\|^2_2,
$$ 
and from the Benjamin-Ono equation we have
\begin{equation}
\frac{d}{dt}\int{xu}dx+\int{x\partial^2_x\mathcal H u}dx+\int{xu\partial_x u}dx=0,
\end{equation}
which implies that
\begin{equation}
\frac{d}{dt}\int{xu}dx=-\int{xu\partial_x u}dx=\frac{1}{2}\|u_0\|^2_2,
\end{equation}
and hence 
$$
\partial_{\xi}(\frac{i\xi}{2} \widehat{u}\ast\widehat{u})(0)=i\frac{d}{dt}\int{xu}dx.
$$

Substituting this into $R_4$ gives us after integration by parts 
\begin{equation}
\begin{split}
R_4&=-6\int_0^t D_{\xi}^{1/2}(e^{-i(t-t')\,|\xi|\xi}\chi\,(t-t')\text{sgn}(\xi)(\frac{d}{dt'}\int{xu}dx))\,dt'\\
&= -6D_{\xi}^{1/2}(e^{-i(t-t')\,|\xi|\xi}\chi\,(t-t')\text{sgn}(\xi)\int{xu}dx |^{t'=t}_{t'=0})\\
&+6\int_0^t D_{\xi}^{1/2}(e^{-i(t-t')\,|\xi|\xi}\chi\,(i|\xi|\xi(t-t')-1)\text{sgn}(\xi)(\int{xu}dx))\,dt'\\
&=6D_{\xi}^{1/2}(e^{-it\,|\xi|\xi}\chi\,t\,\text{sgn}(\xi)\int{xu_0(x)}dx)\\
&+6i\int_0^t D_{\xi}^{1/2}(e^{-i(t-t')\,|\xi|\xi}\chi\,(t-t')|\xi|\xi\,\text{sgn}(\xi)(\int{xu}dx))\,dt'\\
&-6\int_0^t D_{\xi}^{1/2}(e^{-i(t-t')\,|\xi|\xi}\chi\,\text{sgn}(\xi)(\int{xu}dx))\,dt'.
\end{split}
\label{last2}
\end{equation}
We observe that the second term after the last equality in \eqref{last2} belongs to $L^2(\R)$ 
and the first cancels out with $R_3$ since 
\begin{equation}
\begin{split}
\partial_{\xi}  \widehat{u}_0(0)=-i\widehat{xu_0}(0)=-i\int{xu_0(x)}\,dx,
\end{split}
\end{equation}
and therefore 
\begin{equation}
R_3=-6D_{\xi}^{1/2}(e^{-it|\xi|\xi}\chi\, t\,\text{sgn}(\xi)\int{xu_0(x)}dx).
\end{equation}

So the argument above implies that 
\begin{equation}
-6\int_0^t D_{\xi}^{1/2}(e^{-i(t-t')\,|\xi|\xi}\chi\,\text{sgn}(\xi)(\int{xu(x,t')}dx))\,dt'
\label{last3}
\end{equation}
is in $L^2(\R)$ at time $t=t_2$,  and from Theorem \ref{theorem9} this is equivalent to have that
\begin{equation}
\mathcal{D}_{\xi}^{1/2}(\chi(\xi) \,\text{sgn}(\xi)\, \int_0^{t_2} e^{-i(t_1-t')\,|\xi|\xi}(\int{xu(x,t')}dx)\,dt')\,\in L^2(\R),
\label{last4}
\end{equation}
which from Proposition \ref{proposition2} (choosing the support $(-\epsilon,\epsilon)$ of $\chi$ sufficiently small) implies that $ \int_0^{t_2} (\int{xu(x,t')}dx)\,dt'\,=0$ and consequently   $\int xu(x,t)dx$ must be zero at some time in $(0,t_2)$. We reapply the same argument to conclude that $\int xu(x,t)dx$ is again zero at some other time in $(t_2,t_3)$. Finally, the identity \eqref{ole1} completes the proof of the theorem.
\end{section}

\vskip.2in
{\underbar{ACKNOWLEDGMENT}}  : The authors would like to thank J. Duoandikoetxea for fruitful conversations concerning this paper. 
This work was done while G. F. was visiting the Department of Mathematics at the University of California-Santa Barbara whose hospitality he gratefully acknowledges. G.P. was supported by NSF grant DMS-0800967.

\end{document}